\numberwithin{paragraph}{section}
\setlist[enumerate]{label=\it{(\roman*)},
	ref=\it{(\roman*)}}
\newcommand{\R}{{\mathbb R}}
\newtheorem{theorem}{Theorem}[section]
\newtheorem{lemma}[theorem]{Lemma}
\newtheorem{lemma*}{Lemma}
\theoremstyle{definition}
\newtheorem{definition}[theorem]{Definition}
\newtheorem{proposition&definition}[theorem]{Proposition\&Definition}
\newtheorem{lemma&definition}[theorem]{Lemma\&Definition}
\newtheorem{theorem&definition}[theorem]{Theorem\&Definition}
\newtheorem{example*}{Example}
\newtheorem{remark}{Remark}
\newtheorem{question*}{Question}
\def\vm{{\vec m}}
\def\vn{{\vec n}}
\def\p{{\partial}}
\def\I{\mathcal I}
\def\J{\mathcal J}
\def\K{\mathcal K}
\numberwithin{equation}{section}
\begin{document}
	
	\title[ A class  of  multi-parameter Fourier integral operators: endpoint Hardy space bounds]{ A class  of  multi-parameter Fourier integral operators: endpoint Hardy space bounds }
	
	\author[J.~Cheng]{Jinhua Cheng}
	\address{J. Cheng,  Department of Mathematics, Westlake University, 310024 Hangzhou, P. R. China }
	\email{chengjinhua@westlake.edu.cn}
	
	%\author[W.~Gubler]{Walter Gubler}
	%\address{W. Gubler, Mathematik, Universit{\"a}t 
		%Regensburg, 93040 Regensburg, Germany}
	%\email{walter.gubler@mathematik.uni-regensburg.de}

	\begin{abstract}
In this paper we study  a class of Fourier integral operators, whose symbols lie in the  multi-parameter H\"ormander class $S^{\vec m}(  \mathbb{R}^\vn)$, where ~$\vec m=(m_1,m_2,\dots,m_d)$ is the order. We show that if in addition the phase function $\Phi(x,\xi)$ can be written as $\Phi(x,\xi)=\sum_{i=1}^d\Phi_i(x_i,\xi_i)$, and each $\Phi_i(x_i,\xi_i)$ satisfies the non-degeneracy condition, then   such Fourier integral operators with  order  ~$\vec m=(-(n_1-1)/2, -(n_2-1)/2,\dots, -(n_d-1)/2)$  are actually bounded from rectangular  Hardy space $H_{rect}^1(\mathbb{R}^\vn)$ to $L^1( \mathbb{R}^n )$.

	\end{abstract}
	
	\keywords{Multi-parameter Fourier integral operators, Hardy space, Seeger-Sogge-Stein decomposition, Littlewood-Paley decomposition} 
	\subjclass{{Primary 14G40; Secondary 11G10, 14G22}}
	
	\maketitle
	
	\setcounter{tocdepth}{1}
	
	%\tableofcontents

%----------------------------------------------------------------------------------------
%	SECTION 1
%----------------------------------------------------------------------------------------	
\section{Introduction}
\setcounter{equation}{0}
Let $f\in \mathcal{S}(\mathbb{R}^n)$ be a Schwartz function, a Fourier integral operator defined by H\"ormander is in the form
\begin{equation}\label{linear FIO}
T  f(x)=\int_{\mathbb{R}^n}e^{2\pi i  \Phi(x,\xi)}\Hat{f}(\xi)\sigma(x,\xi)d\xi,
\end{equation}
where $\Hat{f}(\xi)=\int_{\mathbb{R}^n} e^{-2\pi i  x\cdot \xi} f(x)dx $  is the Fourier transform of $f$ and   the symbol 
$\sigma(x,\xi)\in \mathcal{C}^\infty(\mathbb{R}^n\times \mathbb{R}^n )$ has a compact support in the $x$-variable, belonging to the H\"ormander class $S^m(\mathbb{R}^n)$, that is,  
 satisfying the estimates
\begin{equation}
| \partial_\xi^\alpha \partial_x^\beta \sigma(x,\xi) |\le C_{\alpha,\beta}(1+|\xi|)^{m-|\alpha|}
\end{equation}
for multi-indices $\alpha,\beta$, and $m\in \mathbb{R}$ is  called the order of the symbol and the operator. The phase function $\Phi(x,\xi)\in \mathcal{C}^{\infty}( \mathbb{R}^n \times (\mathbb{R}^n\setminus\{0\})   )$ is a real-valued, positively homogeneous of degree one in the variable $\xi$, i.e.,
\[
\Phi(x,t\xi)=t\Phi(x,\xi),\quad t>0,\quad x\in \mathbb{R}^n.
\]
What's more, $\Phi$ satisfies the \emph{non-degenercy} condition

\begin{equation}
\det\Big(  \frac{\partial^2 \Phi(x,\xi)}{\partial x\partial \xi} \Big) \neq 0,~~~~~\xi\in \mathbb{R}^n\setminus\{0\}.
\end{equation}
The local regularity properties of Fourier integral operators have been studied extensively and intensively  over the past decades. It is well-known that   $T$  of order $0$ is bounded on $L^2$, as was shown by Eskin \cite{E70}, also  H\"ormander \cite{H71},     the first  systematic treatment of Fourier integral operators. On the other hand, for $p\neq 2$, Fourier integral operators may not  be bounded on $L^p$. Sharp $L^p$-estimate for Fourier integral operators was due to Seeger, Sogge and Stein \cite{SSS91}, where they showed the operator $ T $ of order $m$ is bounded on $L^p(\mathbb{R}^n)$ if $m\le -(n-1)|1/p-1/2|$~ for $~1<p<\infty$. Due to the characterization of the Hardy space $H^1(\mathbb{R}^n)$
and the complex interpolation theorem obtained  by Fefferman and Stein, see \cite{F71} and \cite{FS72},  they essentially proved that $ T $ of order $-(n-1)/2$ maps the Hardy space $H^1(\mathbb{R}^n)$ into $L^1(\mathbb{R}^n)$.

 The classical theory of harmonic analysis may be described as around the Hardy–Littlewood maximal operator and its relationship with certain singular integral operators which commute with the classical one-parameter family dilations $\delta: x\rightarrow \delta x=(\delta x_1, \dots, \delta x_d), ~\delta >0$, while  multi-parameter theory, sometimes called product theory corresponds to range of questions which are concerned with issues of harmonic analysis that are invariant with respect to a family of dilations $\delta: x\rightarrow \delta x=(\delta_1 x_1, \dots, \delta_d x_d), ~\delta_i >0, i=1,\dots,d$.  The multi-parameter function spaces and boundedness of Fourier multipliers, singular integral operators on such spaces have been extensively studied by many authors over the past decades. For instance, \cite{MRS95, MRS96, MPTT04, MPTT06, W22,W23}.

The purpose of the present paper is to prove a multi-parameter analogue of the Seeger-Sogge-Stein theorem. Let $\sigma(x,\xi)\in \mathcal{C}^\infty(\mathbb{R}^n\times \mathbb{R}^n)$ has a compact support in $x$-variable,  we say $\sigma$ belongs to multi-parameter H\"ormander class $S^{\vec m}(  \mathbb{R}^\vn ), \mathbb{R}^\vn:=\mathbb{R}^{n_1} \times \mathbb{R}^{n_2} \times\cdots\times \mathbb{R}^{n_d} ,~\vec m=(m_1,m_2,\dots, m_d)$ if it satisfies the differential inequalities

\begin{equation}\label{product symbol}
| \partial_\xi^\alpha \partial_x^\beta \sigma(x,\xi) |\le C_{\alpha,\beta} \prod_{i=1}^d (1+|\xi_i|)^{m_i-|\alpha_i|}
\end{equation}
for all multi-indices $\alpha,\beta$ and $\xi=(\xi_1,\xi_2,\dots,\xi_d)\in  \mathbb{R}^\vn,~n=\sum_{i=1}^d n_i $.
 A multi-parameter Fourier integral operator is of the same form of a standard Fourier integral operator except that the symbol $\sigma$ now belongs to the  multi-parameter H\"ormander class  $S^{\vec m}(  \mathbb{R}^\vn ) $.  
 
   Similar to one-parameter theory, in order to get sharp results of multi-parameter Fourier integral operators, we need to consider the product Hardy space $H^1(\mathbb{R}^\vn)$,  which was first introduced by Gundy and Stein \cite{GS79}, then developed by Chang and R. Fefferman,  see \cite{CF80, CF82, CF85}, also see  R. Fefferman \cite{F85}-\cite{Fe85}.  However, the atomic decomposition of  the product Hardy space is much more complicated.  It is conjectured at first that the product Hardy space can be represented by rectangle atoms, a function $a(x_1,\dots, x_d)$ supported on a rectangle $R=\prod_{i=1}^d I_i$, $I_i$ is a cube in $\mathbb{R}^{n_i}$, is   called an rectangle atom provided that $||a||_{L^2} \le |R|^{-1/2}$ and $\int_{\mathbb{R}^{n_i}} a(x)dx_i=0, ~1\le i\le d $. 
 Then out of expectation,  Carleson's  counterexample \cite{C74} tells us  that the rectangular Hardy space is a proper subspace of product Hardy space. Actually Chang and Fefferman \cite{CF80} showed  each product Hardy space atom is  supported in  an arbitrary open set of finite measure,  and this prevents us from proving $H^1\rightarrow L^1$ boundedness of operators by checking the actions on rectangle atoms.

Although the product Hardy space cannot be decomposed in the right way as a linear combination of rectangle atoms, nevertheless, for certain  $L^2$ bounded singular integral  operators, for example, Journ\'e's class, R. Fefferman \cite{F86, F87} showed it does suffice to check the action of $T$ on rectangle atoms by applying the remarkable covering lemma proved by Journ\'e, see \cite{J86}. As one-parameter theory, multi-parameter Fourier integral operators in general are not pseudo-local, hence they are not in Journ\'e class.   Recently,  Bajas,   Latorre and  Rinc\'on and Wright \cite{BLRW20} pointed out, for the multi-parameter oscillatory singular integral operators, to prove boundedness on product Hardy space probably requires a new, alternate approach. This motivates us to obtain bounds first on the rectangular Hardy space $H_{rect}^1(\mathbb{R}^\vn)$.

Now we restrict the phase function to be the particular form, that is,
\begin{equation}\label{phase function}
\Phi(x,\xi)=\sum_{i=1}^d \Phi_i(x_i,\xi_i),
\end{equation}
where each $\Phi_i(x_i,\xi_i)\in  \mathcal{C}^{\infty}( \mathbb{R}^n \times (\mathbb{R}^n\setminus\{0\})   )$ is a real-valued, positively homogeneous of degree one in the variable $\xi_i$, and satisfies the \emph{non-degenercy} condition for $i=1,2,,\dots,d.$
Our main result is stated as below

\begin{theorem}\label{main theorem}
Let the Fourier integral operator $T$ be defined as (\ref{linear FIO}), ~(\ref{product symbol})-(\ref{phase function}), 
~$ \vec m=(-(n_1-1)/2, -(n_2-1)/2,\dots, -(n_d-1)/2)$ ,  and  $n=n_1+n_2+\cdots+n_d$,  then 
\begin{equation}\label{aim} 
||T f||_{L^1(\mathbb{R}^n )}\le C  ||f||_{ H_{rect}^1( \mathbb{R}^\vn )},~~~\sigma\in S^{\vec m}(  \mathbb{R}^\vn ).
\end{equation}

\end{theorem}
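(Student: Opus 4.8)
The plan is to reduce to rectangle atoms and then run a $d$-fold Seeger--Sogge--Stein decomposition, using that the phase \eqref{phase function} is separated over the $d$ groups of variables. Since $H_{rect}^1(\R^\vn)$ is, by construction, the atomic Hardy space spanned by rectangle atoms, testing on rectangle atoms is legitimate for $H_{rect}^1\to L^1$ (this is exactly what fails for the product Hardy space, by Carleson's example recalled above): it suffices to prove $\|Ta\|_{L^1(\R^n)}\le C$ with $C$ independent of the atom, for every rectangle atom $a$ supported on $R=\prod_{i=1}^d I_i$ with $\|a\|_{L^2}\le|R|^{-1/2}$ and $\int_{\R^{n_i}}a\,dx_i=0$ for each $i$. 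Denote by $\ell_i$ the side length of $I_i$ and by $c=(c_1,\dots,c_d)$ the centre of $R$. The compact $x$-support of $\sigma$ confines $Ta$ to a fixed compact set, which disposes of the ``large rectangle'' regime at once; the content is in small rectangles.

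First I would perform a Littlewood--Paley decomposition in each frequency block, $\sigma=\sum_{\vec j\in\N^d}\sigma_{\vec j}$ with $\sigma_{\vec j}$ supported where $|\xi_i|\sim 2^{j_i}$, and then, for each $\vec j$ and each index $i$, a second dyadic decomposition of the $i$-th sphere into $\sim 2^{j_i(n_i-1)/2}$ caps of radius $\sim 2^{-j_i/2}$ about a maximal $2^{-j_i/2}$-separated set $\{\xi_{j_i}^{\nu_i}\}$, with an associated homogeneous partition of unity; blocks with $j_i$ bounded need no second decomposition and are easier. This gives $T=\sum_{\vec j}\sum_{\vec\nu}T_{\vec j}^{\vec\nu}$. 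Since $\Phi(x,\xi)=\sum_i\Phi_i(x_i,\xi_i)$ the oscillatory factor is $\prod_i e^{2\pi i\Phi_i(x_i,\xi_i)}$; the symbol need not factor, but it obeys the product bounds \eqref{product symbol}, so integration by parts may be carried out in each $\xi_i$ separately. Linearising each $\Phi_i$ about $\xi_{j_i}^{\nu_i}$ and invoking its non-degeneracy, one obtains the product kernel estimate
\begin{equation*}
\bigl|K_{\vec j}^{\vec\nu}(x,y)\bigr|\le C_N\prod_{i=1}^d 2^{\,j_i(n_i+1)/2+j_im_i}\,\bigl(1+2^{j_i/2}\,|y_i-Y_{j_i}^{\nu_i}(x_i)|+2^{j_i}\,|\langle y_i-Y_{j_i}^{\nu_i}(x_i),\theta_{j_i}^{\nu_i}\rangle|\bigr)^{-N},
\end{equation*}
where $Y_{j_i}^{\nu_i}(x_i)=\nabla_{\xi_i}\Phi_i(x_i,\xi_{j_i}^{\nu_i})$ and $\theta_{j_i}^{\nu_i}=\xi_{j_i}^{\nu_i}/|\xi_{j_i}^{\nu_i}|$; thus $K_{\vec j}^{\vec\nu}(x,\cdot)$ concentrates, up to rapidly decaying tails, on a product of curved plates $P_{j_i}^{\nu_i}(x_i)$ of dimensions $\sim 2^{-j_i}\times(2^{-j_i/2})^{n_i-1}$. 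The same computation yields the almost-orthogonality bound $\|T_{\vec j}^{\vec\nu}\|_{L^2\to L^2}\le C\prod_i 2^{j_im_i}$.

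For fixed $\vec j,\vec\nu$ I would bound $\|T_{\vec j}^{\vec\nu}a\|_{L^1}$ by a near/far dichotomy applied independently in each variable. For $S\subseteq\{1,\dots,d\}$ let $\Omega_S$ be the set of $x$ such that, for $i\in S$, $x_i$ lies in a fixed dilate of the plate $P_{j_i}^{\nu_i}$ meeting $I_i$, while for $i\notin S$ it lies outside. On $\Omega_S$: for $i\in S$ use Cauchy--Schwarz and the $L^2$ bound, paying the measure of the relevant part of the plate ($\lesssim 2^{-j_i(n_i+1)/2}$); for $i\notin S$ use the cancellation $\int a\,dx_i=0$, replacing $K_{\vec j}^{\vec\nu}(x,y)$ by $K_{\vec j}^{\vec\nu}(x,y)-K_{\vec j}^{\vec\nu}(x,y)\big|_{y_i=c_i}$ and applying the mean value theorem in $y_i$ against the $y_i$-derivative bound above, which contributes $\min(1,2^{j_i/2}\ell_i)$ times the off-plate decay. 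Summing over the $2^d$ choices of $S$ controls $\|T_{\vec j}^{\vec\nu}a\|_{L^1}$.

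Finally I would sum over $\vec\nu$ and $\vec j$. For fixed $\vec j$ there are $\prod_i 2^{j_i(n_i-1)/2}$ values of $\vec\nu$, and it is precisely the choice $\vec m=(-(n_1-1)/2,\dots,-(n_d-1)/2)$ that makes this cap-count cancel the amplitude factor $\prod_i 2^{j_im_i+j_i(n_i-1)/2}$, so that in each variable the $j_i$-sum collapses to a convergent geometric series governed by whether $2^{j_i}\ell_i\lessgtr 1$; hence $\sum_{\vec j\in\N^d}$ converges to a constant independent of $a$. The two main obstacles are: (i) producing a genuinely product kernel bound although the symbol is not a tensor product, which is what forces the block-by-block integration by parts and the careful use of \eqref{product symbol}; and (ii) the $d$-fold near/far bookkeeping, where one must verify, for every subset $S$, that the measure losses from the near factors and the cancellation gains from the far factors balance against the number of caps --- this balancing is what pins down the critical order and is the heart of the matter. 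As in the one-parameter theorem, reconciling the flat cancellation of the rectangle atom with the curved plates $P_{j_i}^{\nu_i}$ requires the non-degeneracy of each $\Phi_i$, now invoked separately in each factor.
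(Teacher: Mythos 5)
Your architecture coincides with the paper's: reduction to rectangle atoms (legitimate for $H^1_{rect}$ since it is defined atomically), a Littlewood--Paley decomposition in each frequency block, the Seeger--Sogge--Stein decomposition into $\sim 2^{j_i(n_i-1)/2}$ caps per scale and per factor, the product plate/kernel majorization obtained by integrating by parts block by block (legitimate because the phase splits as $\sum_i\Phi_i$ and the symbol obeys the product estimates), the per-variable near/far dichotomy indexed by subsets (the paper's sum over $\mathcal{I}\subset\mathcal{K}$ together with the regions of influence $Q_i$), the moment condition $\int a\,dx_i=0$ in the low-frequency regime, and the balance between the cap count and the order $m_i=-(n_i-1)/2$. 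The only organizational difference is that the paper never decomposes in the near variables: it proves a mixed-norm $L^1_{x_\mathcal{I}}L^2_{x_\mathcal{J}}$ estimate via Minkowski's inequality and a separate $L^2$ lemma.

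The one place where your sketch does not close as written is the near-factor accounting. For $i\in S$ you propose Cauchy--Schwarz against the plate measure together with the bound $\|T_{\vec j}^{\vec\nu}\|_{L^2\to L^2}\lesssim\prod_i 2^{j_im_i}$; carrying this out, summing over the $2^{j_i(n_i-1)/2}$ caps and over $j_i\ge 0$, yields only $C\|a\|_{L^2}\le C|R|^{-1/2}$, which is unbounded as the rectangle shrinks. One must gain a factor $|I_i|^{1/2}=\ell_i^{n_i/2}$ in every near variable with $\ell_i<1$. In your scheme this forces you to invoke the cancellation also for the near factors when $2^{j_i}\ell_i<1$, so that the amplitude $2^{j_im_i}=2^{-j_i(n_i-1)/2}$ is effectively summed only over $j_i\gtrsim\log_2(1/\ell_i)$, producing the missing $\ell_i^{(n_i-1)/2}$, which against the region-of-influence measure $|Q_i|^{1/2}\sim\ell_i^{1/2}$ gives $\ell_i^{n_i/2}$; your text assigns the cancellation exclusively to the far factors $i\notin S$. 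The paper extracts the same gain without any frequency decomposition in the near variables: in Lemma 5.1 it factors out $(1+|\xi_i|^2)^{-(n_i-1)/4}$, bounds the resulting convolution kernel by $|x_i|^{-(n_i+1)/2}$, and applies Hardy--Littlewood--Sobolev on the small cube to obtain $\prod_{i\in\mathcal{J}_1}|I_i|^{1/2-1/(2n_i)}$. (A minor further point: the mean-value gain from the moment condition in the radial direction is $2^{j_i}\ell_i$ rather than $2^{j_i/2}\ell_i$; your weaker exponent still sums geometrically, so this is harmless.) With the near-factor repair, your argument is essentially the paper's proof.
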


\begin{remark}
Throughout the paper, the constant $\mathfrak{C}$ may be dependent on the $\sigma,\Phi, \alpha, \beta$, $d, p, n$, but not dependent on $f$ or other specific function and  may be different line by line.
\end{remark}

The paper is organized as follows.  In section $2$, we shall introduce the necessary notation and make a multiple Littlewood-Paley decomposition of $T$. In section $3$, we will further decompose the operator $T$ by Seeger-Sogge-Stein decomposition. Section $4$ will be devoted of the majorization of the kernels of $T$. In section $5$, we prove the $L^2$-boundedness of $T$ and in the last section shall discuss further $H^1_{rect} \rightarrow L^1$-boundedness of more general operator $T$.

%\subsection*{Notation and terminology}

%----------------------------------------------------------------------------------------
%	SECTION 2
%----------------------------------------------------------------------------------------
\section{Preliminaries}
\setcounter{equation}{0}
A direct generalization of the  Hardy space $H^1(\R^n)$ to multi-parameter setting is the rectangular Hardy space $H_{rect}^1(\R^\vn)$.
We define the rectangular Hardy space by its atomic representation.
\begin{definition}
A rectangle atom on $\R^\vn$ is a function $a_R(x)$ supported on a rectangle $R=\prod_{i=1}^d I_i$ having the property 
\[
||a_R||_{L^2(\R^n)} \le |R|^{-1/2},
\]
and for each $1\le i \le d$
\[
\int_{\R^{n_i}} a_R(x_1,x_2, \dots,x_d) dx_i =0.
\]
\end{definition}
\begin{definition}
The rectangular Hardy space $H_{rect}^1(\R^\vn)$ is the space of functions $\sum\lambda _k a_k$ with each $a_k$  a rectangle atom and $\sum_{k} |\lambda_k|<\infty$.
 \end{definition}

Though this space seems the most natural generalization to higher dimensions, the rectangular Hardy space is too small that we cannot find a similar complex interpolation theorem which is crucial to obtain the $L^p$-estimates for multi-parameter operators arising from harmonic analysis. However, due to Chang and Fefferman,   the product Hardy space  is a suitable generalization for $H^1(\R^n) $.

Then immediately from the atomic decomposition above,  (\ref{aim}) reduces to 
\begin{equation}\label{aim 2}
|| T a_R||_{L^1( \mathbb{R}^{n})}\le \mathfrak{C},~~~\sigma\in S^{\vec m}(  \mathbb{R}^\vn),
\end{equation}
where $\vec m=(-(n_1-1)/2, -(n_2-1)/2,\dots, -(n_d-1)/2)$ and $a_R$ is an arbitrary   rectangle  atom.
Now fix an atom $a_R (x)$ supported  on  a rectangle  $R \subset \mathbb{R}^n$, we can actually assume that the measure of $R$,  $|R|<1$, otherwise, the estimate is trivial.  Because of our assumption the symbol $\sigma(x,\xi)$ has a compact support in $x$-variable, we have
\begin{equation}\label{trivial estimate}
||T  a_R ||_{L^1}\le C  ||T  a_R ||_{L^2}\le  C  ||a_R||_{L^2}\le  C  |R|^{-1/2}\le  C.
\end{equation}
The first inequality holds because $T $ has fixed compact support and Cauchy-Schwarz inequality; the second follows from the well-know $L^2$-boundedness of $T $ of order $0$.

Now    Let 
$R=\prod_{i=1}^d I_i,~~~~I_i\subset \mathbb{R}^{n_i}$ is a cube with center $\overline{y}_i$ for $i=1,2,\dots,d.$ Let $r_i$ be the side length of the cube $I_i$,  from (\ref{trivial estimate}) we see that  whether $r_i<1$ is crucial, hence  for convenience,  we define index sets $\mathcal{I},~\mathcal{J}, ~\K$ such that
\begin{equation}\label{def I}
\begin{array}{lc}\displaystyle
\K =\{ i \in \{1,2,\dots,d\}: 0<r_i<1  \},~~~~ \I \cup \mathcal{J}=\{1,2,\dots,d\} ,\quad \I \cap \J =\emptyset.
\end{array}
\end{equation}
Moreover
\begin{equation}
\begin{array}{lc}\displaystyle
\I \subset \K, \qquad 
\mathcal{J}_1=\mathcal{J} \cap  \K, \qquad \mathcal{J}_2=\{ i \in \{1,2,\dots,d\}: r_i \ge 1  \}.
\end{array}
\end{equation}

Let   $\varphi$  be  a smooth bump function on $\mathbb{R}$ such that
\begin{equation}\label{varphi}
\varphi(t)=1 \quad \textit{for} \quad |t|\le 1; \qquad \varphi(t)=0, \quad \textit{for}  \quad |t|\ge 2
\end{equation}
and define
\begin{equation}\label{phi_j}
\phi_{j_i}(\xi_i) =\varphi(2^{-j_i}|\xi_i|) -\varphi(2^{-j_i+1}|\xi_i|), \quad j_i\in \mathbb{Z}, \quad j_i>0,
\end{equation}
and  $\phi_{0}(\xi_i) =\varphi(|\xi_i|)$
for $\xi_i\in \mathbb{R}^{n_i},~i=1,2,\dots,d$. Let   $j=(j_1, j_2,\dots, j_d)$ and  let $\mathcal{A} =\{i_1,i_2,\dots,i_{|\mathcal{A}|} \}$ be a subset of $\{1,2,\dots,d \}$, we denote 
\[
\begin{array}{cc} \displaystyle
\xi_\mathcal{A}=(\xi_{i_1},\xi_{i_2}, \dots, \xi_{i_{|\mathcal{A}|}} ),\quad
x_\mathcal{A}=(x_{i_1},x_{i_2}, \dots, x_{i_{|\mathcal{A}|}} ),\quad j_\mathcal{A}=(j_{i_1},j_{i_2}, \dots, j_{i_{|\mathcal{A}|}} ),\quad 
\\\\ \displaystyle
\quad k_\mathcal{A}=(k_{i_1}, k_{i_2}, \dots, k_{i_{|\mathcal{A}|}} ),\quad 
n_\mathcal{A}=\sum_{i\in \mathcal{A}} n_i,\quad \Phi_\mathcal{A}(x_\mathcal{A},\xi_\mathcal{A} )=\sum_{i\in\mathcal{A}} \Phi_i(x_i,\xi_i),
\\\\ \displaystyle
\sum_{j_\mathcal{A}\ge k_\mathcal{A} } =\prod_{i\in \mathcal{A} } \sum_{j_i\ge k_i},
\quad \sum_{j_\mathcal{A}\le k_\mathcal{A} } =\prod_{i\in \mathcal{A} } \sum_{j_i\le k_i},\quad \phi_{j_\mathcal{A}  }(\xi_\mathcal{A})=\prod_{i\in \mathcal{A}}\phi_{j_i}(\xi_i),\quad 
\p_{\xi_\mathcal{A}}^{\alpha_\mathcal{A}}=\prod_{i \in \mathcal{A}} \p_{\xi_i}^{\alpha_i}
\end{array}
\]
Therefore, we have a multiple partition of unity,
\begin{equation}
\sum_{j_\mathcal{I}\ge 0 }\phi_{j_\mathcal{I} }(\xi_\mathcal{I}) =
\prod_{i\in \mathcal{I}} \sum_{j_i\ge 0} \phi_{j_\mathcal{I} }(\xi_\mathcal{I})=1.
\end{equation}
 Note that  $\phi_{j_\mathcal{I}}(\xi_{\mathcal{I}} )$ supported in the product dyadic shells,
\begin{equation}\label{support}
\{\xi_{\mathcal{I}} \in  \mathbb{R}^{n_{\mathcal{I}}} : 2^{j_i-1} \le |\xi_i|\le 2^{j_i+1},~~i\in \mathcal{I}\},
\end{equation}
and it satisfies the following differential inequalities
\begin{equation}
\Big|   \partial_{\xi_\mathcal{I}}^{\alpha_\I}  \phi_{j_\mathcal{I}} (\xi_\mathcal{I})\Big|\le 
C_{\alpha} \prod_{i\in \mathcal{I}} \Big( \frac{1}{|\xi_i|} \Big)^{|\alpha_i|}
\end{equation}
for all multi-indices $\alpha_i,~ i\in \mathcal{I}$.
We now define partial operators
\begin{equation}\label{rectangular partial operator}
T_{j_\mathcal{I}}f(x)=\int_{\mathbb{R}^n} f(y)K_{j_\mathcal{I}}(x,y)dy,
\end{equation}
where 
\begin{equation}
K_{j_\mathcal{I}  }(x, y)=\int_{\mathbb{R}^n} e^{2\pi  i ( \Phi(x,\xi)-y\cdot \xi ) }\sigma(x,\xi) \phi_{ j_\mathcal{I} }(\xi_\I) d\xi.
\end{equation}

%%~~~~~~~~~~~~~~~~~~~~~~~~~~~~~~Multi-parameter Seeger-Sogge-Stein decomposition~~~~~~~~~~~~~~~~~~~~~~~~~~~~ %%
\section{{Seeger-Sogge-Stein decomposition}}\label{multi-parameter Seeger-Sogge-Stein decomposition}
 For each $i\in \{1,2,\dots, d\}$, let 
$j_i>0$ be a positive integer, then  we  consider a collection of points $\{ \xi_{j_i}^{\nu_i} \}_{\nu_i} \in \mathbb{R}^{n_i}$, uniformly distributed on the unit sphere $\mathbb{S}^{n_i-1}$ with grid length equal to $2^{-j_i/2}$ multiplied by some suitable constant. Then 
for every given $\xi_i\in \mathbb{R}^{n_i}$, there exists a $\xi^{\nu_i}_{j_i}$ such that 
\[
\Big| \frac{\xi_i}{|\xi_i|}  -\xi^{\nu_i}_{j_i}\Big|\le 2^{-\frac{j_i }{2}}.
\]
On the other hand, there are at most a constant multiple of $2^{j_i\frac{n_i-1}{2}}$ elements in the collection $\{ \xi_{j_i}^{\nu_i} \}_{\nu_i} $.
Recall the definition of $\varphi$ in (\ref{varphi}), define
\begin{equation}
\varphi_{j_i}^{\nu_i} (\xi_i) =\varphi\Big(  2^{j_i/2}  \Big| \frac{\xi_i}{|\xi_i|}-\xi^{\nu_i}_{j_i}\Big|  \Big),
\end{equation}
whose support lies inside 
\[
\Gamma_{j_i}^{\nu_i}=\Big\{ \xi_i\in \mathbb{R}^{n_i}: \Big| \frac{\xi_i}{|\xi_i|}-\xi^{\nu_i}_{j_i}\Big|  \le 2\cdot 2^{-j_i/2}  \Big\}
\]
We also define
\begin{equation}\label{multi chi definition} 
\chi_{j_\I}^\nu(\xi_\I )= \prod_{i\in \I}\chi_{j_i}^{\nu_i}(\xi_i),~~~\chi_{j_i}^{\nu_i}(\xi_i)=\frac{\varphi_{j_i}^{\nu_i}(\xi_i)}{\sum_{\nu_i} \varphi_{j_i}^{\nu_i} (\xi_i)}.
\end{equation}
By a rotation,  we  choose axes in the $\xi_i$-space so that $\xi''_{i}$  is  in the direction of $\xi_{j_i}^{\nu_i} $ and $\xi'_i $ is perpendicular to $\xi_{j_i}^{\nu_i} $.
Then we have
\begin{equation}\label{multi chi estimate 1}
|\partial_{\xi''_{i}}^N \chi_{j_i}^{\nu_i}(\xi_i)|\le C_N|\xi_i|^{-N},~~~~~N\ge 1
\end{equation}
and
\begin{equation}\label{multi chi estimate 2}
|\partial_{\xi'_{i}}^\alpha \chi_{j_i}^{\nu_i} (\xi_i)|\le  C_\alpha 2^{|\alpha| j_i/2} |\xi_i|^{-|\alpha|}
\end{equation}
for multi-index $\alpha$. 

Take the Seeger-Sogge-Stein decomposition into consideration, we further define the partial operators
\begin{equation}\label{partial operator final}
T^\nu_{j_\mathcal{I}}f(x)=\int_{\mathbb{R}^n} f(y)K^\nu_{j_\mathcal{I}}(x,y)dy,
\end{equation}
where 
\begin{equation}
K^\nu_{j_\mathcal{I}  }(x, y)=\int_{\mathbb{R}^n} e^{2\pi  i ( \Phi(x,\xi)-y\cdot \xi ) }\sigma(x,\xi) \phi_{ j_\mathcal{I} }(\xi_\I) \chi^\nu_{j_\I}(\xi_\I) d\xi.
\end{equation} 
Thus immediately we see $ T_{j_\I}= \sum_{\nu} T^\nu_{j_\I} $ and $K_{j_\I} =\sum_{\nu} K^\nu_{j_\I}$.

Now we are in the position to define the \emph{region of influence} for multi-parameter setting.
We first define the rectangle $R_{j_i}^{\nu_i}$   be the set of all $x_i$ such that
\[
\Big| \Big<\overline{y}_i-\nabla_{\xi_i} \Phi_i(x_i, \xi_{j_i}^{\nu_i}  ), e_i''\Big> \Big|\le C 2^{-j_i}
\] 
for unit vector  $e_i''$  which  is the  same direction of $\xi_{j_i}^{\nu_i} $ and 
\[
\Big| \Big<\overline{y}_i-\nabla_{\xi_i} \Phi(x_i,  \xi_{j_i}^{\nu_i}  ) , e_i'\Big> \Big|\le C 2^{-j_i/2}
\] 
for  unit vectors $e_i'$  which are normal  to  $\xi_{j_i}^{\nu_i} $. Thus we see 
$R_{j_i }^{\nu_i}$ is now basically a rectangle having $1$ side of length $\mathfrak{C} 2^{-j_i}$ and $ n_i-1$ of length $\mathfrak{C} 2^{-j_i/2}$. 
Next  the mapping 
\[
x_i \mapsto y_i =\nabla_{\xi_i} \Phi_i(x_i, \xi_{j_i}^{\nu_i} )
\]
has a non-vanishing  Jacobian for each $\xi_{j_i}^{\nu_i} $ since $\Phi_i$ satisfies the non-degeneracy  condition. 
 Then we see 
\[
|R_{j_i}^{\nu_i}|\le  C_{\Phi_i}  2^{-j_i} 2^{-j_i(n_i-1)/2} .
\]
Let $k_i\ge 0$ be an integer such that 
\[
2^{-k_i} \le r_i \le 2^{-k_i +1}, \qquad i \in \K.
\]
The {\bf region of influence  }  in subspace $\R^{n_i}$ for multi-parameter Fourier integral operators is defined as
\begin{equation}
 Q_i=\bigcup_{j_i \ge k_i } \bigcup_{\nu }R_{j_i}^{\nu_i}, \quad i\in \K.
\end{equation}
Therefore, we have  $Q_i\subset \mathbb{R}^{n_i}$ and an elementary calculation shows 
\begin{equation}
|Q_i|\le  C  \sum_{j_i \ge k_i } |R_{j_i}^{\nu_i} |2^{j_i(n_i-1)/2} \le  C_{\Phi_i}  r_i.
\end{equation}
Now  define 
\[
Q_i^c=\mathbb{R}^{n_i} \setminus Q_i, ~~~~Q_{\mathcal{I}}^c= \bigotimes_{i\in\mathcal{I}} Q_i^c, ~~~~Q_{\mathcal{J}_1}= \bigotimes_{i\in\mathcal{J}_1} Q_i.
\]
Therefore, we have another "partition of unity",
\begin{equation}
\begin{array}{lc}\displaystyle  
\int_{\mathbb{R}^n} |T a_R (x)| dx=\sum_{\mathcal{I} \subset \K } \int_{Q^c_{\mathcal{I}}} \int_{Q_{\mathcal{J}_1}} \int_{\mathbb{R}^{n_{\mathcal{J}_2}}} |T a_R  (x)| dx
\\\\ \displaystyle
=\sum_{\mathcal{I} \subset \K  } \sum_{j_\mathcal{I} \ge 0} \int_{Q^c_{\mathcal{I}}} \int_{Q_{\mathcal{J}_1}} \int_{\mathbb{R}^{n_{\mathcal{J}_2}}} |T_{j_\mathcal{I}} a_R(x)| dx,
\end{array}
\end{equation}
where the sum $\sum_{\mathcal{I}\subset \K }$ is taken over all subsets of $\K $.
Since there are only finitely many $\mathcal{I}$'s, it suffices to show that, for a fixed $\mathcal{I}$, we have
\begin{equation}
\sum_{j_\mathcal{I} \ge 0} \int_{Q^c_{\mathcal{I}}} \int_{Q_{\mathcal{J}_1}} \int_{\mathbb{R}^{n_{\mathcal{J}_2}}} | T_{j_\mathcal{I}} a_R (x)| dx \le C.
\end{equation}
Applying Cauchy-Schwarz inequality and note that $\sigma(x,\xi)$ has compact support in $x$-variable, we have 
\begin{equation}\label{rectangular aim}
\sum_{j_\mathcal{I} \ge 0} \int_{Q^c_{\mathcal{I}}} \int_{Q_{\mathcal{J}_1}} \int_{\mathbb{R}^{n_{\mathcal{J}_2}}} | T _{j_\mathcal{I}} a_R(x)| dx 
\le |Q_{\mathcal{J}_1}|^{\frac{1}{2}} \sum_{j_\mathcal{I} \ge 0}   \int_{Q^c_{\mathcal{I}}} 
\Big \{\int_{\mathbb{R}^{n_{\mathcal{J}}}} |T_{j_\mathcal{I}} a_R(x)|^2 dx_\mathcal{J} \Big\}^{\frac{1}{2}} dx_\mathcal{I}.
\end{equation}

 \section{Majorization of the kernels }\label{majorization on hardy space}
Remember in one-parameter theory \cite{SSS91},  when the radius of the support of the atom is large, then it is trivial to apply the $L^2$-boundedness of $T$, while then the radius is small, then in order to obtain $L^1$-estimate,  we need majorize the kernels of $T$.  
However, for a fix rectangle atom $R$, though the measure $|R|<1$, there may be  some $r_i\le 1$ while some $r_i >1$. Therefore, for the multi-parameter setting,  we need a mixed $L^1$ and $L^2$ estimate of $T$.

We aim to show the following key  estimates of the kernels.
\begin{lemma}\label{rectangular majorization}
Suppose that $\sigma(x,\xi) \in S^{\vec m}( \R^\vn)$,  where $~\vec m=(-(n_1-1)/2, -(n_2-1)/2,\dots, -(n_d-1)/2)$. Let $\mathcal{I}_1 $ be a subset of $\mathcal{I}$ and $\mathcal{I}_2= \mathcal{I} \setminus \mathcal{I}_1$,
we have
\begin{equation}
\begin{array}{lc}\displaystyle
\int_{ Q^c_\mathcal{I} } 

\Big\{
\int_{ \mathbb{R}^{n_\mathcal{J}} } 
\Big|  T_{j_\mathcal{I}} a_R(x) \Big|^2 d x_\mathcal{J}
\Big\}^{\frac{1}{2}}
d x_\mathcal{I}
\le  C 
\prod_{i\in \mathcal{I}_1}2^{-j_i+k_i}
\prod_{i\in \mathcal{I}_2}2^{j_i-k_i}
\\\\ \displaystyle
  \prod_{i\in \mathcal{J}_1} |I_i|^{\frac{1}{2} -\frac{1}{2n_i}}
 \int_{ \mathbb{R}^{n_\mathcal{I}} } 
 \Big\{
 \int_{ \mathbb{R}^{n_\mathcal{J}} } 
\Big|a_R (x_\mathcal{I}, x_\mathcal{J} ) 
\Big|^2
dx_{\mathcal{J}} \Big\}^{\frac{1}{2}}
dx_\mathcal{I}.
\end{array}
\end{equation}

\end{lemma}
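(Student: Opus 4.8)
The plan is to run a multi-parameter version of the Seeger--Sogge--Stein argument, carrying out the oscillatory analysis separately in each factor $\R^{n_i}$ (which is legitimate precisely because $\Phi=\sum_i\Phi_i$), keeping an $L^2$ footing in the $x_\mathcal{J}$-variables and an $L^1$ footing in the $x_\mathcal{I}$-variables. First I would insert the angular decomposition $T_{j_\mathcal{I}}=\sum_\nu T^\nu_{j_\mathcal{I}}$ of Section~\ref{multi-parameter Seeger-Sogge-Stein decomposition}. The two competing products $\prod_{i\in\mathcal{I}_1}2^{-j_i+k_i}$ and $\prod_{i\in\mathcal{I}_2}2^{j_i-k_i}$ come from two different devices, applied direction by direction. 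For $i\in\mathcal{I}_2$ I would use the cancellation $\int a_R\,dx_i=0$: writing $T^\nu_{j_\mathcal{I}}a_R(x)=\int\bigl(K^\nu_{j_\mathcal{I}}(x,y)-P(x,y)\bigr)a_R(y)\,dy$ with $P$ the Taylor polynomial of the kernel in the $y_{\mathcal{I}_2}$-variables about the center $\overline{y}_{\mathcal{I}_2}$ of $R$, and noting that each $\partial_{y_i}$ falling on $K^\nu_{j_\mathcal{I}}$ costs a factor $\lesssim 2^{j_i}$ while $y_i$ runs over $I_i$ of side $\approx 2^{-k_i}$, one gains $\prod_{i\in\mathcal{I}_2}2^{j_i-k_i}$. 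For $i\in\mathcal{I}_1$ the factor $\prod_{i\in\mathcal{I}_1}2^{-j_i+k_i}$ is instead read off from the restriction $x_i\in Q_i^c$: since $Q_i=\bigcup_{j'_i\ge k_i}\bigcup_{\nu_i}R_{j'_i}^{\nu_i}$ is a fattening --- by the amount $\sim 2^{-k_i}$ fixed by the atom's width $r_i$ --- of the plank family $\{R_{j_i}^{\nu_i}\}_{\nu_i}$, on $Q_i^c$ the vector $\nabla_{\xi_i}\Phi_i(x_i,\xi_{j_i}^{\nu_i})-y_i$ (with $y_i\in I_i$) is bounded below, and integrating the resulting rapidly decaying kernel over $Q_i^c$ instead of over $\R^{n_i}$ saves the stated power of $2^{k_i-j_i}$, just as in the one-parameter argument of \cite{SSS91}; when $j_i<k_i$ one simply keeps the trivial bound, since then $1\le 2^{k_i-j_i}$ and $1\le 2^{j_i-k_i}$.

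The pointwise kernel estimates behind both devices I would obtain by repeated integration by parts in each $\xi_i$, $i\in\mathcal{I}$, separately. Because $\Phi=\sum_i\Phi_i$, one has $\nabla_{\xi_i}\bigl(\Phi(x,\xi)-y\cdot\xi\bigr)=\nabla_{\xi_i}\Phi_i(x_i,\xi_i)-y_i$; using homogeneity and the non-degeneracy of $\Phi_i$ one may replace $\nabla_{\xi_i}\Phi_i(x_i,\xi_i)$ by $\nabla_{\xi_i}\Phi_i(x_i,\xi_{j_i}^{\nu_i})$ up to controlled errors on the support of $\phi_{j_i}\chi_{j_i}^{\nu_i}$, and then \eqref{multi chi estimate 1}--\eqref{multi chi estimate 2} together with $\sigma\in S^{\vec m}(\R^\vn)$ yield, after $N$ integrations by parts along $e_i''$ and along the $e_i'$-directions for each $i\in\mathcal{I}$,
\[
\bigl|K^\nu_{j_\mathcal{I}}(x,y)\bigr|\ \le\ C_N\prod_{i\in\mathcal{I}}\frac{2^{j_i}}{\bigl(1+2^{j_i}\,|\langle\nabla_{\xi_i}\Phi_i(x_i,\xi_{j_i}^{\nu_i})-y_i,\,e_i''\rangle|+2^{j_i/2}\,|\nabla_{\xi_i}\Phi_i(x_i,\xi_{j_i}^{\nu_i})-y_i|\bigr)^{N}}\ \bigl|W^\nu_{j_\mathcal{I}}(x,y)\bigr|,
\]
where $W^\nu_{j_\mathcal{I}}$ is the remaining oscillatory integral in the $\xi_\mathcal{J}$-frequencies, still carrying $(x_\mathcal{I},y_\mathcal{I},\xi_\mathcal{I})$ as parameters. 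The factor $2^{-j_i(n_i-1)/2}$ produced by the order of $\sigma$ in $\xi_i$ has already been spent to convert the $\xi_i$-volume $\sim 2^{j_i(n_i+1)/2}$ into the displayed $2^{j_i}$; since $x_i\mapsto\nabla_{\xi_i}\Phi_i(x_i,\xi_{j_i}^{\nu_i})$ has non-vanishing Jacobian, integrating the $i$-th factor in $x_i$ over $\R^{n_i}$ produces $\approx 2^{-j_i(n_i-1)/2}$, which exactly cancels the number $\approx 2^{j_i(n_i-1)/2}$ of admissible $\nu_i$ --- this is where the choice $\vec m=(-(n_1-1)/2,\dots,-(n_d-1)/2)$ is used, and it is what makes the $\nu$-series summable once $x_\mathcal{I}$ is confined to $Q_\mathcal{I}^c$.

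It then remains to take the $L^2_{x_\mathcal{J}}$-norm. In the directions $i\in\mathcal{J}_2$ I would invoke the $L^2$-boundedness of Fourier integral operators of non-positive order with non-degenerate phase (the single-parameter theory of \cite{E70,H71} and its product form), which costs nothing since $m_i=-(n_i-1)/2\le 0$. In the directions $i\in\mathcal{J}_1$ I would, after also inserting a Littlewood--Paley decomposition in $\xi_i$, sum the dyadic pieces in $L^2_{x_i}$: the piece at frequency $|\xi_i|\approx 2^{l_i}$ has $L^2_{x_i}$-operator norm $\lesssim 2^{-l_i(n_i-1)/2}$, while, because $a_R$ is supported on $I_i$ and has vanishing $x_i$-integral, the frequency-$2^{l_i}$ component of $a_R$ in $\xi_i$ has $L^2_{x_\mathcal{J}}$-norm $\lesssim\min\bigl(1,(2^{l_i}r_i)^{(n_i+2)/2}\bigr)\,\|a_R(x_\mathcal{I},\cdot)\|_{L^2_{x_\mathcal{J}}}$; summing over $l_i\ge 0$ --- the low frequencies controlled by the cancellation, the high ones by the symbol decay (which converges precisely when $n_i\ge 2$, the case $n_i=1$ being trivial) --- produces the net factor $r_i^{(n_i-1)/2}=|I_i|^{\frac{1}{2}-\frac{1}{2n_i}}$. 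Putting the three ingredients together --- combining the $\mathcal{J}$-estimate with the $\mathcal{I}$-kernel bounds via Minkowski's inequality, integrating over $x_\mathcal{I}\in Q_\mathcal{I}^c$, and summing the now rapidly decaying $\nu$-series --- gives the asserted inequality.

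The main obstacle I anticipate is that $\sigma$ does \emph{not} factor as a product over $i$, so $K^\nu_{j_\mathcal{I}}$ and $W^\nu_{j_\mathcal{I}}$ are not genuine tensor products: the ``product'' kernel bound above has to be extracted by performing all the $\xi_i$-integrations by parts ($i\in\mathcal{I}$) simultaneously and controlling the resulting mixed $\xi$-derivatives of $\sigma$ by \eqref{product symbol}, and one must check that the subsequent $\mathcal{J}$-direction $L^2$-analysis is unaffected by the surviving $\mathcal{I}$-derivatives. The cleanest way to arrange the latter is to expand $\sigma$ in a Fourier series in the $x_\mathcal{I}$-variables on the cube supporting it, $\sigma(x,\xi)=\sum_{\mu}c_\mu(x_\mathcal{J},\xi)\,e^{2\pi i\mu\cdot x_\mathcal{I}}$ with the $c_\mu$ rapidly decreasing in $\mu$ and product symbols in $(x_\mathcal{J},\xi)$, so that in the $\mathcal{J}$-step $\xi_\mathcal{I}$ appears only as an inert parameter. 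A second delicate point is purely combinatorial: one must verify that the geometric decay of $W^\nu_{j_\mathcal{I}}$ and of the $i$-factors overcomes the number of planks simultaneously in all $|\mathcal{I}|$ directions when the $\nu$-sum is performed inside the $x_\mathcal{I}$-integration over $Q_\mathcal{I}^c$.
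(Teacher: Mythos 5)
Your proposal is correct and follows essentially the same route as the paper: the Seeger--Sogge--Stein angular decomposition $T_{j_\mathcal{I}}=\sum_\nu T^\nu_{j_\mathcal{I}}$, integration by parts in $\xi_\mathcal{I}$ against the linearized phase $\nabla_{\xi_i}\Phi_i(x_i,\xi_{j_i}^{\nu_i})\cdot\xi_i$ to produce the plank-localized kernel bounds, the lower bound on the displacement over $Q_\mathcal{I}^c$ for the $\prod_{i\in\mathcal{I}_1}2^{-j_i+k_i}$ gain, the moment condition of $a_R$ for the $\prod_{i\in\mathcal{I}_2}2^{j_i-k_i}$ gain, the change of variables $x_i\mapsto\nabla_{\xi_i}\Phi_i(x_i,\xi_{j_i}^{\nu_i})$ to cancel the $\nu$-count, and an $L^2$ estimate in the $x_\mathcal{J}$-variables (the paper's Lemma \ref{L^2 estimate}). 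The only local deviation is your derivation of the factor $\prod_{i\in\mathcal{J}_1}|I_i|^{\frac12-\frac1{2n_i}}$ via Littlewood--Paley pieces and the atom's support and cancellation, where the paper instead convolves with the kernel of $(1+|\xi_i|^2)^{-(n_i-1)/4}$ and applies Hardy--Littlewood--Sobolev plus H\"older on $I_i$; both yield the same gain, and your concern about $\sigma$ not being a tensor product is handled in the paper simply by noting that the mixed derivatives are controlled by \eqref{product symbol}, so the Fourier-series expansion in $x_\mathcal{I}$ is not needed.
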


\begin{proof}
Recall the definition of the partial operator $T_{j_\I}^\nu $ in (\ref{partial operator final}) and rewritte
\begin{equation}
\begin{array}{lc} \displaystyle
T^\nu_{j_\mathcal{I}} a_R(x)=\int_{ \mathbb{R}^{n_\mathcal{I}} } 
\int_{ \mathbb{R}^{n_\mathcal{I}} } 
e^{2\pi  i  ( \Phi_\mathcal{I}(x_\mathcal{I}, \xi_\mathcal{I}) - y_\mathcal{I} \cdot \xi_\mathcal{I})}
G^\nu_{j_\mathcal{I}} a_R (x, y_\mathcal{I}, \xi_\mathcal{I} ) 
 d\xi_\mathcal{I} dy_\mathcal{I} ,
  \end{array}
\end{equation}
where 
\[
\begin{array}{lc}\displaystyle
G^\nu_{j_\mathcal{I}} a_R (x, y_\mathcal{I}, \xi_\mathcal{I} ) =
 \int_{ \mathbb{R}^{n_\mathcal{J}} }
 e^{2\pi i  \Phi_\mathcal{J}(x_\mathcal{J}, \xi_\mathcal{J}) }
 \widehat{a_R}(y_\mathcal{I}, \xi_\mathcal{J}  )
 \sigma(x,\xi) \phi_{j_\mathcal{I}}(\xi_\mathcal{I}) \chi^\nu_{j_{\mathcal{I}}}(\xi_\mathcal{I}) d\xi_\mathcal{J} ,
 \\\\ \displaystyle
  \widehat{a_R}(y_\mathcal{I}, \xi_\mathcal{J}  )= \int_{ \mathbb{R}^{n_\mathcal{J}} } e^{-2\pi  i y_\mathcal{J} \cdot \xi_\mathcal{J} } a_R(y_\mathcal{I}, y_\mathcal{J} )
dy_\mathcal{J}.
\end{array}
\]
Now for each $i\in \mathcal{I}$, we write similarly 
\begin{equation}
\Phi_i(x_i,\xi_i)-y_i\cdot\xi_i= [\nabla_{\xi_i}\Phi_i(x_i, \xi_{j_i}^{\nu_i} )-y_i ]\cdot \xi_i +\Psi_i(x_i,\xi_i),
\end{equation}
where
\begin{equation}
\Psi_i(x_i,\xi_i)=\Phi_i(x_i,\xi_i) -\nabla_{\xi_i}\Phi_i(x_i, \xi_{j_i}^{\nu_i}  )\cdot \xi_i,
\end{equation}
and from $\textbf 4.5$ chapter IX in  \cite{S93},  we see 
\begin{equation}\label{Psi estimate}
|\partial_{\xi''_{i}}^N \Psi_i(x_i,\xi_i)|\le C_N\cdot 2^{-Nj_i},~~|\partial_{\xi'_{i}}^\alpha \Psi_i(x_i,\xi_i)|\le C_N\cdot 2^{-|\alpha|j_i/2}
\end{equation}
for all multi-indices $\alpha$.
We now rewrite the kernel as
\begin{equation}
\begin{array}{lc}\displaystyle
 \Tilde{ G }^\nu_{j_\mathcal{I}} a_R (x, y_\mathcal{I}, \xi_\mathcal{I} ) 
 =
 e^{2\pi  i  \Psi_\I(x_\I,\xi_\I) } 
 G^\nu_{j_\mathcal{I}} a_R (x, y_\mathcal{I}, \xi_\mathcal{I} ) 
 \\\\ \displaystyle
 =
 \int_{ \mathbb{R}^{n_\mathcal{J}} }
 e^{2\pi  i   \Phi_\mathcal{J}(x_\mathcal{J}, \xi_\mathcal{J}) }
 \widehat{a_R}(y_\mathcal{I}, \xi_\mathcal{J}  ) \Theta^\nu_{j_\mathcal{I} }(x,\xi)
  d\xi_\mathcal{J} .
  \end{array}
\end{equation}
where
\begin{equation}\label{Theta}
  \Psi_\I(x_\I,\xi_\I) =\sum_{i\in \mathcal{I}}  \Psi_i(x_i,\xi_i),  \quad 
\Theta^\nu_{j_\mathcal{I} }(x,\xi)= e^{2\pi  i    \Psi_\I(x_\I,\xi_\I) } \sigma(x,\xi) 
\phi_{j_\mathcal{I} }(\xi_{\mathcal{I}})    \chi^\nu_{j_\mathcal{I}}(\xi_{\mathcal{I}}).
\end{equation}
We next introduce the operator $L$ defined by
\[
L=L_{\xi_{\mathcal{I}}}=\prod_{i\in \mathcal{I}} 
(I-2^{j_i/2 }\nabla_{\xi'_i} \cdot 2^{j_i/2 }\nabla_{\xi'_i}  ) (I-2^{j_i }\nabla_{\xi''_i} \cdot 2^{j_i }\nabla_{\xi''_i}  ) 
\]
Because of (\ref{multi chi definition})-(\ref{multi chi estimate 2}) and (\ref{Psi estimate})-(\ref{Theta}), and the fact that $\sigma(x,\xi) \in S^{\vec m}(  \mathbb{R}^\vn )$,  where $~\vec m=(-(n_1-1)/2, -(n_2-1)/2,\dots, -(n_d-1)/2)$,  we get that
\[
|L^N\Theta_{j_\mathcal{I}}^\nu (x,\xi)|\le C_N\prod_{i\in \mathcal{I}} 2^{-j_i(n_i-1)/2},~~~N\ge 0.
\]
On the other hand, the support of $\Theta_{j_\mathcal{I}}^\nu (x,\xi) $  in the variable $\xi_{\mathcal{I}}$ is 
\[
|\mathbf{supp}_{\mathcal{I}}\Theta_{j_\mathcal{I}}^\nu (x,\xi)|\le  C  \prod_{i\in \mathcal{I}} 2^{j_i} 2^{j_i(n_i-1)/2}.
\]
Thus integration by parts gives 
\begin{equation}
\begin{array}{lc} \displaystyle
T^\nu_{j_\mathcal{I}} a_R(x)=\int_{ \mathbb{R}^{n_\mathcal{I}} } 
\int_{ \mathbb{R}^{n_\mathcal{I}} } \prod_{i\in \mathcal{I}}
e^{2\pi i  [\nabla_{\xi_i}\Phi_i(x_i, \xi_{j_i}^{\nu_i}  )-y_i ]\cdot \xi_i }
\Tilde{G}^\nu_{j_\mathcal{I}} a_R (x, y_\mathcal{I}, \xi_\mathcal{I} ) 
 d\xi_\mathcal{I} dy_\mathcal{I} 
 \\\\ \displaystyle
= C  \int_{ \mathbb{R}^{n_\mathcal{I}} } 
\int_{ \mathbb{R}^{n_\mathcal{I}} } 
 \prod_{i\in \mathcal{I}} \Big\{1+2^{j_i} (\nabla_{\xi_i}\Phi_i(x_i,\xi^{\nu_i}_{j_i})-y_i )'' \Big\}^{-2N}
 \Big\{1+2^{j_i/2} (\nabla_{\xi_i}\Phi_i(x_i,\xi^{\nu_i}_{j_i})-y_i )' \Big\}^{-2N}
 \\\\ \displaystyle ~~~~~~~~~~~
\times  e^{2\pi i  [\nabla_{\xi_i}\Phi_i(x_i,\xi_{j_i}^{\nu_i})-y_i ]\cdot \xi_i }
L^N\Tilde{G}^\nu_{j_\mathcal{I}} a_R (x, y_\mathcal{I}, \xi_\mathcal{I} ) 
 d\xi_\mathcal{I} dy_\mathcal{I} .
  \end{array}
\end{equation}
Now use Minkowski integral inequality  and Lemma \ref{L^2 estimate} proved in the next section,
we have
\begin{equation}\label{Omega size}
\begin{array}{lc}\displaystyle
\Big\{
\int_{ \mathbb{R}^{n_\mathcal{J}} } \Big| T^\nu_{j_\mathcal{I}} a_R(x) \Big|^2 d x_\mathcal{J}
\Big\}^{\frac{1}{2}}
\\\\ \displaystyle
\le  C_N  \int_{ \mathbb{R}^{n_\mathcal{I}} } 
\int_{ \mathbb{R}^{n_\mathcal{I}} } 
 \prod_{i\in \mathcal{I}} \Big\{1+2^{j_i}|(\nabla_{\xi_i}\Phi_i(x_i,\xi^{\nu_i}_{j_i})-y_i )''|  \Big\}^{-2N}
 \Big\{1+2^{j_i/2}|(\nabla_{\xi_i}\Phi_i(x_i,\xi^{\nu_i}_{j_i})-y_i )'| \Big\}^{-2N}
 \\\\ \displaystyle ~~~~~~~~~~~
 \Big \{
 \int_{ \mathbb{R}^{n_\mathcal{J}} } 
\Big|
L^N\Tilde{G}^\nu_{j_\mathcal{I}} a_R (x, y_\mathcal{I}, \xi_\mathcal{I} ) 
\Big|^2
dx_{\mathcal{J}} \Big\}^{\frac{1}{2}}
 d\xi_\mathcal{I} dy_\mathcal{I} 
 \\\\ \displaystyle
\le  C_N  
\int_{ \mathbb{R}^{n_\mathcal{I}} } 
 \prod_{i\in \mathcal{I}} \Big\{1+2^{j_i}|(\nabla_{\xi_i}\Phi_i(x_i,\xi^{\nu_i}_{j_i})-y_i )''|  \Big\}^{-2N}
 \Big\{1+2^{j_i/2}|(\nabla_{\xi_i}\Phi_i(x_i,\xi^{\nu_i}_{j_i})-y_i )'| \Big\}^{-2N}
 \\\\ \displaystyle ~~~~~~~~~~~
 \times 
 \prod_{i\in \mathcal{I}} 2^{j_i} 2^{j_i\frac{n_i-1}{2}} \cdot 2^{-j_i\frac{n_i-1}{2}}
 \prod_{i\in \mathcal{J}_1} |I_i|^{\frac{1}{2} -\frac{1}{2n_i}}
 \Big\{
 \int_{ \mathbb{R}^{n_\mathcal{J}} } 
\Big|a_R (y_\mathcal{I}, x_\mathcal{J} ) 
\Big|^2
dx_{\mathcal{J}} \Big\}^{\frac{1}{2}}
dy_\mathcal{I} .

\end{array}
\end{equation}
Recall that $k_i$ is an integer such that 
\[
2^{-k_i} \le r_i\le 2^{-k_i+1},  \quad i\in \mathcal{I}.
\]
For each $i\in \mathcal{I}$, there is a unit vector $\xi_{k_i}^{\mu_i}$, so that $|\xi^{\nu_i}_{j_i} -\xi^{\mu_i}_{k_i}|\le 2^{-k_i/2}$. Since $Q_i=\bigcup_{j_i\ge k_i}\bigcup_{\nu_i }R_{j_i}^{\nu_i}$, we have the inclusion
\[
\mathbb{R}^{n_i}\setminus Q_i \subset \mathbb{R}^{n_i}\setminus R_{{k_i}}^{\mu_i}.
\]
Then from the definition of $R_{k_i}^{\mu_i}$, for each   $i \in\mathcal{I}$,  we have 
\[
 2^{k_i}\Big| \Big<\overline{y}_i-\nabla_{\xi_i} \Phi_i(x_i, \xi_{k_i}^{\mu_i}  ), e_i''\Big> \Big| >  C 
\] 
or 
\[
2^{k_i/2}  \Big| \Big<\overline{y}_i-\nabla_{\xi_i} \Phi(x_i, \xi_{k_i}^{\mu_i}  ) , e_i'\Big> \Big|>  C.
\] 
whenever $y_i \in B_{r_i}(\overline{y_i})$, then $|y_i - \overline{y_i}|\le 2^{-k_i+1}$, since we can choose  $C$  sufficiently large, we get as a consequence
\[
2^{j_i} \Big| \Big< y_i-\nabla_{\xi_i} \Phi_i(x_i, \xi_{j_i}^{\nu_i} ), e_i''\Big> \Big|\ge C 2^{j_i-k_i}
\] 
or 
\[
2^{j_i/2} \Big| \Big<y_i-\nabla_{\xi_i} \Phi_i(x_i, \xi_{j_i}^{\nu_i}  ), e_i'\Big> \Big|\ge C 2^{(j_i-k_i)/2}, 
\] 
when $ j_i\ge k_i$ for each $i\in \mathcal{I}$ and  $ x_i \in Q_i^c$. Inserting this in the bound (\ref{Omega size}) for $i\in \mathcal{I}_1$
 and arguing as before, we obtain
\begin{equation}\label{Omega size 2}
\begin{array}{lc}\displaystyle
\Big\{
\int_{ \mathbb{R}^{n_\mathcal{J}} } \Big| T^\nu_{j_\mathcal{I}} a_R(x) \Big|^2 d x_\mathcal{J}
\Big\}^{\frac{1}{2}}
 \\\\ \displaystyle
\le  C_N  
\int_{ \mathbb{R}^{n_\mathcal{I}} } 
 \prod_{i\in \mathcal{I}} \Big\{1+2^{j_i}|(\nabla_{\xi_i}\Phi_i(x_i,\xi^{\nu_i}_{j_i})-y_i )''|  \Big\}^{1-2N}
 \Big\{1+2^{j_i/2}|(\nabla_{\xi_i}\Phi_i(x_i,\xi^{\nu_i}_{j_i})-y_i )'| \Big\}^{2-2N}
 \\\\ \displaystyle ~\prod_{i\in \mathcal{I}_1} 2^{-j_i+k_i}
 \prod_{i\in \mathcal{I}} 2^{j_i} 2^{j_i\frac{n_i-1}{2}} \cdot 2^{-j_i\frac{n_i-1}{2}}
 \prod_{i\in \mathcal{J}_1} |I_i|^{\frac{1}{2} -\frac{1}{2n_i}}
 \Big\{
 \int_{ \mathbb{R}^{n_\mathcal{J}} } 
\Big|a_R (y_\mathcal{I}, x_\mathcal{J} ) 
\Big|^2
dx_{\mathcal{J}} \Big\}^{\frac{1}{2}}
dy_\mathcal{I} .
\end{array}
\end{equation}
In carrying out the estimate for  $\int_{ Q_{\mathcal{I}}^c} 
\Big\{
\int_{ \mathbb{R}^{n_\mathcal{J}} } 
\Big| T^\nu_{j_\mathcal{I}} a_R(x) \Big|^2 d x_\mathcal{J}
\Big\}^{\frac{1}{2}}
d x_\mathcal{I}
$, we use the majorization (\ref{Omega size 2}), as well as the change of variables
\[
x_i \mapsto \nabla_{\xi_i}\Phi_i(x_i,\xi_{j_i}^{\nu_i}),
\]
whose Jacobian is bounded from below, since each $\Phi_i(x_i,\xi_i)$ satisfies the non-degeneracy condition by our assumption. The result is
\begin{equation}
\begin{array}{lc}\displaystyle
\int_{ Q_{\mathcal{I}}^c} 
\Big\{
\int_{ \mathbb{R}^{n_\mathcal{J}} } 
\Big| T^\nu_{j_\mathcal{I}} a_R(x) \Big|^2 d x_\mathcal{J}
\Big\}^{\frac{1}{2}}
d x_\mathcal{I}
 \\\\ \displaystyle
\le  \mathfrak{C}_N   \prod_{i\in \mathcal{I}} 2^{-j_i\frac{n_i-1}{2}} 
\int_{ \mathbb{R}^{n_\mathcal{I}} } 
\Big\{
\int_{ Q^c_\mathcal{I}} 
 \prod_{i\in \mathcal{I}} \Big\{1+|(x_i-y_i )''|  \Big\}^{1-2N}
 \Big\{1+|(x_i-y_i )'| \Big\}^{2-2N} 
 dx_\mathcal{I} 
 \Big\}
 \\\\ \displaystyle~
 \times   ~\prod_{i\in \mathcal{I}_1} 2^{-j_i+k_i}
  \prod_{i\in \mathcal{J}_1} |I_i|^{\frac{1}{2} -\frac{1}{2n_i}}
 \Big\{
 \int_{ \mathbb{R}^{n_\mathcal{J}} } 
\Big|a_R (y_\mathcal{I}, x_\mathcal{J} ) 
\Big|^2
dx_{\mathcal{J}} \Big\}^{\frac{1}{2}}
dy_\mathcal{I} ,
\end{array}
\end{equation}
if we choose $N$ so that $2N>n+2$. Therefore
\begin{equation}
\begin{array}{lc}\displaystyle
\int_{ Q^c_\mathcal{I} } 
\Big\{
\int_{ \mathbb{R}^{n_\mathcal{J}} } 
\Big| T^\nu_{j_\mathcal{I}} a_R(x) \Big|^2 d x_\mathcal{J}
\Big\}^{\frac{1}{2}}
d x_\mathcal{I}
\\\\ \displaystyle
\le  \mathfrak{C}_N  
 \prod_{i\in \mathcal{I}} 2^{-j_i\frac{n_i-1}{2}} ~\prod_{i\in \mathcal{I}_1} 2^{-j_i+k_i}
  \prod_{i\in \mathcal{J}_1} |I_i|^{\frac{1}{2} -\frac{1}{2n_i}}
 \int_{ \mathbb{R}^{n_\mathcal{J}} } 
 \Big\{
 \int_{ \mathbb{R}^{n_\mathcal{J}} } 
\Big|a_R (x_\mathcal{I}, x_\mathcal{J} ) 
\Big|^2
dx_{\mathcal{J}} \Big\}^{\frac{1}{2}}
dx_\mathcal{I}.
\end{array}
\end{equation}
Since $\int_{\R^{n_i}}a_R(y) dy_i=0 $ for each $i=1,2,\dots,d$,  we have 
\[
\int_{\mathbb{R}^{n_i}} G^\nu_{j_\mathcal{I}}a_R(x,y_\mathcal{I}, \xi_\mathcal{I})dy_i=0,  \quad \textit{for} \quad i \in \mathcal{I}.
\]
We can rewrite  $ T^\nu_{j_\mathcal{I}} a_R(x)$ as 
\begin{equation}\label{Omega deformation}
\begin{array}{lc} \displaystyle
T^\nu_{j_\mathcal{I}} a_R(x)=\int_{ \mathbb{R}^{n_\mathcal{I}} } 
\int_{ \mathbb{R}^{n_\mathcal{I}} } 
e^{2\pi  i   [\Phi_\mathcal{I}(x_\mathcal{I}, \xi_\mathcal{I})-y_{\mathcal{I}_1} \cdot \xi_{\mathcal{I}_1} ]}
\prod_{i\in\mathcal{I}_2}
[
e^{-2\pi  i  y_i \cdot \xi_i}-e^{-2\pi  i  \overline{y}_i \cdot \xi_i}
]
G^\nu_{j_\mathcal{I}} a_R (x, y_\mathcal{I}, \xi_\mathcal{I} ) 
 d\xi_\mathcal{I} dy_\mathcal{I} 
 \\\\ \displaystyle
 =
 \int_{ \mathbb{R}^{n_\mathcal{I}} } 
\int_{ \mathbb{R}^{n_\mathcal{I}} } 
e^{2\pi  i   [\Phi_\mathcal{I}(x_\mathcal{I}, \xi_\mathcal{I})-y_{\mathcal{I}_1} \cdot \xi_{\mathcal{I}_1} ]}
\prod_{i\in\mathcal{I}_2}
e^{-2\pi  i y_i'\cdot \xi_i} (-2\pi \i \xi_i)|y_i-\overline{y}_i|

G^\nu_{j_\mathcal{I}} a_R (x, y_\mathcal{I}, \xi_\mathcal{I} ) 
 d\xi_\mathcal{I} dy_\mathcal{I} 
 
  \end{array}
\end{equation}
for $y_i'=ty_i +(1-t)\overline{y}_i$,~$0\le t\le 1$. Once we observe that $|\xi_i|$ bounded by $C 2^{j_i}$ for $i\in \I$, then repeating the proof above we have 
\begin{equation}
\begin{array}{lc}\displaystyle
\int_{ Q^c_\mathcal{I} } 

\Big\{
\int_{ \mathbb{R}^{n_\mathcal{J}} } 
\Big| T^\nu_{j_\mathcal{I}} a_R(x) \Big|^2 d x_\mathcal{J}
\Big\}^{\frac{1}{2}}
d x_\mathcal{I}
\le  \mathfrak{C}  \prod_{i\in \mathcal{I}} 2^{-j_i\frac{n_i-1}{2}} 
\prod_{i\in \mathcal{I}_1}2^{-j_i+k_i}
\prod_{i\in \mathcal{I}_2}2^{j_i-k_i}
\\\\ \displaystyle
  \prod_{i\in \mathcal{J}_1} |I_i|^{\frac{1}{2} -\frac{1}{2n_i}}
 \int_{ \mathbb{R}^{n_\mathcal{J}} } 
 \Big\{
 \int_{ \mathbb{R}^{n_\mathcal{J}} } 
\Big|a_R (x_\mathcal{I}, x_\mathcal{J} ) 
\Big|^2
dx_{\mathcal{J}} \Big\}^{\frac{1}{2}}
dx_\mathcal{I}.
\end{array}
\end{equation}
Finally,  summing the  above inequality  in $\nu$, and take into account that there are essentially at most a constant multiple $\prod_{i\in \mathcal{I}}2^{j_i(n_i-1)/2}$ terms involved, this proves the desired estimates.
\end{proof}

Now we are in the position to prove our  main  Theorem \ref{main theorem}.
\begin{proof}
First we are going to write 
\[
\sum_{j_\mathcal{I} \ge 0}=\prod_{i\in \mathcal{I}}\sum_{j_i\ge 0}=\sum_{\mathcal{I}_1} 
\sum_{j_{\mathcal{I}_1}>k_{ \mathcal{I}_1}} \sum_{j_{\mathcal{I}_2}\le k_{ \mathcal{I}_2}} ,
\]
where the sum  $\sum_{ \mathcal{I}_1 }$  is taken over all subsets of $\mathcal{I}$. Since there are finitely many $\mathcal{I}_1$'s, it suffices to show 
\[
|Q_{\mathcal{J}_1}|^{\frac{1}{2}} \sum_{j_{\mathcal{I}_1}>k_{ \mathcal{I}_1}} \sum_{j_{\mathcal{I}_2}\le k_{ \mathcal{I}_2}}   \int_{Q^c_{\mathcal{I}}} 
\Big \{\int_{\mathbb{R}^{n_{\mathcal{J}_2}}} |T_{j_\mathcal{I}} a_R (x)|^2 dx_\mathcal{J} \Big\}^{\frac{1}{2}} dx_\mathcal{I} \le C.
\]
Recall the definition of $T_{j_\I}$ in (\ref{rectangular partial operator}),  applying the Lemma  \ref{rectangular majorization} we have 
\begin{equation}
\begin{array}{lc}\displaystyle
|Q_{\mathcal{J}_1}|^{\frac{1}{2}}
 \sum_{j_{\mathcal{I}_1}>k_{ \mathcal{I}_1}} \sum_{j_{\mathcal{I}_2}\le k_{ \mathcal{I}_2}}
    \int_{Q^c_{\mathcal{I}}} 
\Big \{\int_{\mathbb{R}^{n_{\mathcal{J}}}} |T_{j_\mathcal{I}} a_R(x)|^2 dx_\mathcal{J} \Big\}^{\frac{1}{2}} dx_\mathcal{I} 
\\\\ \displaystyle
\le   C
 \sum_{j_{\mathcal{I}_1}>k_{ \mathcal{I}_1}} \sum_{j_{\mathcal{I}_2}\le k_{ \mathcal{I}_2}}
 \Big\{
  \prod_{i\in \mathcal{J}_1}  r_i^{\frac{1}{2}}
\prod_{i\in \mathcal{I}_1}2^{-j_i+k_i}
\prod_{i\in \mathcal{I}_2}2^{j_i-k_i}
\\\\ \displaystyle
\times  \prod_{i\in \mathcal{J}_1} |I_i|^{\frac{1}{2} -\frac{1}{2n_i}}
 \int_{ \mathbb{R}^{n_\mathcal{J}} } 
 \Big\{
 \int_{ \mathbb{R}^{n_\mathcal{J}} } 
\Big|a_R (x_\mathcal{I}, x_\mathcal{J} ) 
\Big|^2
dx_{\mathcal{J}} \Big\}^{\frac{1}{2}}
dx_\mathcal{I}
\Big\}
\\\\ \displaystyle
\le  C
  \prod_{i\in \mathcal{J}_1} |I_i|^{\frac{1}{2}}
 \int_{ \mathbb{R}^{n_\mathcal{J}} } 
 \Big\{
 \int_{ \mathbb{R}^{n_\mathcal{J}} } 
\Big|a_R (x_\mathcal{I}, x_\mathcal{J} ) 
\Big|^2
dx_{\mathcal{J}} \Big\}^{\frac{1}{2}}
dx_\mathcal{I}
\\\\ \displaystyle
\le C 
  \prod_{i\in \K} | I_i|^{\frac{1}{2}}\cdot  \Big\|a_R
\Big\|_{L^2}
\le  C
|R|^{\frac{1}{2}}
\Big\|a_R
\Big\|_{L^2}.
\end{array}
\end{equation}
The last inequality holds since $r_i\ge 1$ for $i\in \mathcal{J}_2$. since $a_R(x)$ is a rectangular atom and recall the inequality (\ref{rectangular aim}),
\begin{equation}
\begin{array}{lc}\displaystyle
\int_{\R^n} |Ta_R(x)|dx= \int_{\R^{n_\K}} \int_{ \R^{\J_2} } |T a_R(x)| dx \le \sum_{\I \subset \K}  \int_{Q^c_{\mathcal{I}}} \int_{Q_{\mathcal{J}_1}} \int_{\mathbb{R}^{n_{\mathcal{J}_2}}} |T  a_R(x)| dx
\\\\ \displaystyle
\le 
\sum_{\I \subset \K}
\sum_{j_\mathcal{I} \ge 0} \int_{Q^c_{\mathcal{I}}} \int_{Q_{\mathcal{J}_1}} \int_{\mathbb{R}^{n_{\mathcal{J}_2}}} |T_{j_\mathcal{I}} a_R(x)| dx
\le  C
|R|^{\frac{1}{2}}
\Big\|a_R
\Big\|_{L^2}
\le C,

\end{array}
\end{equation}
which proves our Theorem \ref{main theorem}.

\end{proof}

\section{$L^2$ estimates}
For completeness, we  prove the following theorem concerning the $L^2$ estimates of $T$.
\begin{lemma}\label{L^2 estimate}
Let $T $ be defined as in Theorem \ref{main theorem},   suppose  $\sigma \in S^{\vec m}(  \mathbb{R}^\vn ), \vec m=(-(n_1-1)/2, -(n_2-1)/2, \cdots, (n_d-1)/2)$,  then we have 
\begin{equation}
 \Big \{
 \int_{ \mathbb{R}^{n_\mathcal{J}} } 
\Big|
L^N\Tilde{G}^\nu_\mathcal{I} a_R (x, y_\mathcal{I}, \xi_\mathcal{I} ) 
\Big|^2
dx_{\mathcal{J}} \Big\}^{\frac{1}{2}}
\le C
\prod_{ i\in \mathcal{I}} 2^{-j_i\frac{n_i-1}{2}}
 \prod_{i\in \mathcal{J}_1} |I_i|^{\frac{1}{2} -\frac{1}{2n_i}}
\Big\|  a_R(y_\mathcal{I}, \cdot)\Big \|_{L^2(\mathbb{R}^{n_\mathcal{J}})}, 
\end{equation}
where
\[
\Big\|  a_R(y_\mathcal{I}, \cdot)\Big \|_{L^2(\mathbb{R}^{n_\mathcal{J}})}=
\Big\{
\int_{\mathbb{R}^{n_\mathcal{J}} }
\Big|  a_R(y_\mathcal{I}, x_\mathcal{J} )\Big |^2 dx_\mathcal{J} \Big\}^{\frac{1}{2}}.
\]
\end{lemma}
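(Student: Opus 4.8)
The plan is to freeze the outer variables $(x_{\mathcal I},y_{\mathcal I},\xi_{\mathcal I})$ and to read the left-hand side as the $L^2(\R^{n_{\mathcal J}})$-norm of a Fourier integral operator in the $\mathcal J$-variables applied to the slice $g:=a_R(y_{\mathcal I},\cdot)$. Since $\Phi_{\mathcal J}$ and $\widehat{a_R}(y_{\mathcal I},\xi_{\mathcal J})$ do not involve $\xi_{\mathcal I}$ while $L=L_{\xi_{\mathcal I}}$ differentiates only in $\xi_{\mathcal I}$, one has
\[
L^N\widetilde G^\nu_{j_{\mathcal I}}a_R(x,y_{\mathcal I},\xi_{\mathcal I})=(Sg)(x_{\mathcal J}),\qquad
Sg(x_{\mathcal J})=\int_{\R^{n_{\mathcal J}}}e^{2\pi i\Phi_{\mathcal J}(x_{\mathcal J},\xi_{\mathcal J})}\,b(x_{\mathcal J},\xi_{\mathcal J})\,\widehat g(\xi_{\mathcal J})\,d\xi_{\mathcal J},
\]
with $b:=L^N_{\xi_{\mathcal I}}\Theta^\nu_{j_{\mathcal I}}(x,\xi)$. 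Since $\phi_{j_{\mathcal I}}$, $\chi^\nu_{j_{\mathcal I}}$ and $e^{2\pi i\Psi_{\mathcal I}}$ depend only on $(x_{\mathcal I},\xi_{\mathcal I})$, Leibniz' rule in $\xi_{\mathcal I}$ together with $\sigma\in S^{\vec m}(\R^\vn)$ and the estimates (\ref{multi chi estimate 1})--(\ref{multi chi estimate 2}), (\ref{Psi estimate}) --- the same computation that yields the bound $|L^N\Theta^\nu_{j_{\mathcal I}}|\le C_N\prod_{i\in\mathcal I}2^{-j_i(n_i-1)/2}$ in Section \ref{majorization on hardy space} --- shows that $|\partial_{\xi_{\mathcal J}}^{\alpha_{\mathcal J}}\partial_{x_{\mathcal J}}^{\beta_{\mathcal J}}b(x_{\mathcal J},\xi_{\mathcal J})|\le C_{\alpha_{\mathcal J},\beta_{\mathcal J}}\prod_{i\in\mathcal I}2^{-j_i(n_i-1)/2}\prod_{i\in\mathcal J}(1+|\xi_i|)^{-(n_i-1)/2-|\alpha_i|}$, uniformly in the frozen parameters; that is, $b$ satisfies the product H\"ormander estimates of order $(-(n_i-1)/2)_{i\in\mathcal J}$ with constant $C\prod_{i\in\mathcal I}2^{-j_i(n_i-1)/2}$. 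It therefore suffices to prove, uniformly, that
\[
\|Sg\|_{L^2(\R^{n_{\mathcal J}})}\le C\prod_{i\in\mathcal I}2^{-j_i(n_i-1)/2}\ \prod_{i\in\mathcal J_1}|I_i|^{\frac12-\frac1{2n_i}}\ \|g\|_{L^2(\R^{n_{\mathcal J}})}
\]
for $g$ supported in $\prod_{i\in\mathcal J}I_i$ and $S$ a Fourier integral operator in the $\mathcal J$-variables with tensor-sum phase $\Phi_{\mathcal J}=\sum_{i\in\mathcal J}\Phi_i$ (each $\Phi_i$ non-degenerate) and amplitude $b$.

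For the regularity input, put $\Lambda_{\mathcal J_1}:=\prod_{i\in\mathcal J_1}(1-\Delta_{x_i})^{(n_i-1)/4}$. By the Fourier integral operator calculus, $\Lambda_{\mathcal J_1}S^{*}$ is again a Fourier integral operator in the $\mathcal J$-variables with a non-degenerate tensor-sum phase and amplitude of order $0$ in the blocks $i\in\mathcal J_1$ and of order $-(n_i-1)/2\le0$ in the blocks $i\in\mathcal J_2$, hence of order $\le0$ in every block. Such product-type Fourier integral operators are bounded on $L^2(\R^{n_{\mathcal J}})$: one runs the one-parameter Eskin--H\"ormander argument one block at a time, treating the operator in the inner blocks as a uniformly bounded $\mathcal L(L^2)$-valued amplitude for the operator in the remaining block --- the relevant seminorms being uniform because of the product H\"ormander estimates and the (uniform) non-degeneracy of each $\Phi_i$. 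Hence $\|\Lambda_{\mathcal J_1}S^{*}f\|_{L^2}\le C\prod_{i\in\mathcal I}2^{-j_i(n_i-1)/2}\|f\|_{L^2}$; equivalently $S^{*}f$ enjoys $H^{(n_i-1)/2}$-regularity in each variable $x_i$, $i\in\mathcal J_1$, with that constant.

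The factor $\prod_{i\in\mathcal J_1}|I_i|^{1/2-1/2n_i}$ is then produced by duality from the smallness of $\mathrm{supp}\,g$. Writing $\|Sg\|_{L^2}=\sup_{\|f\|_{L^2}\le1}|\langle g,S^{*}f\rangle|$ and using $\mathrm{supp}\,g\subset\prod_{i\in\mathcal J}I_i$, Cauchy--Schwarz gives $|\langle g,S^{*}f\rangle|\le\|g\|_{L^2}\,\|\mathbf 1_{\prod_{i\in\mathcal J}I_i}S^{*}f\|_{L^2}$. For each $i\in\mathcal J_1$ one has the elementary H\"older bound $\|\mathbf 1_{I_i}u\|_{L^2_{x_i}}\le|I_i|^{\frac12-\frac1{2n_i}}\|u\|_{L^{2n_i}_{x_i}}$, while for $i\in\mathcal J_2$ one simply uses $\mathbf 1_{I_i}\le1$; iterating these produces the factor $\prod_{i\in\mathcal J_1}|I_i|^{1/2-1/2n_i}$ times the mixed norm of $S^{*}f$ that is $L^{2n_i}$ in the variables $i\in\mathcal J_1$ and $L^2$ in the remaining variables. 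By the Sobolev embedding $H^{(n_i-1)/2}(\R^{n_i})\hookrightarrow L^{2n_i}(\R^{n_i})$ applied successively in the blocks $i\in\mathcal J_1$ (reordering the mixed norms by Minkowski's integral inequality when needed), this mixed norm is $\lesssim\|\Lambda_{\mathcal J_1}S^{*}f\|_{L^2}\le C\prod_{i\in\mathcal I}2^{-j_i(n_i-1)/2}\|f\|_{L^2}$. (When $n_i=1$ the exponent $\frac12-\frac1{2n_i}$ vanishes, which matches that a block of order $0$ affords no gain.) Combining the displays and recalling $g=a_R(y_{\mathcal I},\cdot)$ yields the lemma.

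The main obstacle is precisely the $L^2$-boundedness --- and the consequent $L^2\to H^{(n_i-1)/2}$ mapping in the blocks $i\in\mathcal J_1$ --- of the product-type Fourier integral operators above. In the one-parameter case this is classical (Eskin, H\"ormander), but in the multi-parameter case one has to carry out the underlying stationary-phase / Cotlar--Stein almost-orthogonality argument with respect to each block $x_i$, $i\in\mathcal J$, separately and check that the operator norm produced in each block is uniform in the remaining, frozen blocks; this is exactly where the separated structure $\Phi_{\mathcal J}=\sum_{i\in\mathcal J}\Phi_i$ of the phase and the product-type symbol estimates are needed. The other steps --- the reduction, the duality, and the H\"older and Sobolev estimates --- are routine.
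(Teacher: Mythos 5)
Your proposal is correct and follows the same skeleton as the paper's proof: freeze $(x_{\mathcal I},y_{\mathcal I},\xi_{\mathcal I})$, observe that $L^N\Tilde G^\nu_{j_{\mathcal I}}a_R$ is a product-type Fourier integral operator in the $\mathcal J$-variables with tensor-sum phase and amplitude satisfying product H\"ormander estimates of order $-(n_i-1)/2$ with the prefactor $\prod_{i\in\mathcal I}2^{-j_i(n_i-1)/2}$ (this is exactly the paper's (\ref{symbol estimate})), invoke $L^2$-boundedness of the order-$\vec 0$ product operator, and extract the factor $\prod_{i\in\mathcal J_1}|I_i|^{1/2-1/(2n_i)}$ from the localization of the atom in the $\mathcal J_1$-blocks. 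The one step you handle differently is this last extraction. The paper transfers the Bessel weight $\prod_{i\in\mathcal J}(1+|\xi_i|^2)^{-(n_i-1)/4}$ onto the atom, forming $Sa_R$, and proves the key bound $\|Sa_R(y_{\mathcal I},\cdot)\|_{L^2}\le C\prod_{i\in\mathcal J_1}|I_i|^{1/2-1/(2n_i)}\|a_R(y_{\mathcal I},\cdot)\|_{L^2}$ (estimate (\ref{multiplier estimate})) by an explicit dyadic kernel estimate for the convolution kernel $\mathcal R$ followed by the Hardy--Littlewood--Sobolev inequality block by block and H\"older with $\mathrm{supp}\,a_R$. You instead dualize, use H\"older with the support of the slice to reach a mixed $L^{2n_i}_{x_{\mathcal J_1}}L^2_{x_{\mathcal J_2}}$ norm of $S^*f$, and conclude by the Sobolev embedding $H^{(n_i-1)/2}(\R^{n_i})\hookrightarrow L^{2n_i}(\R^{n_i})$ in each block. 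These are dual formulations of the same inequality (HLS applied to the Bessel kernel is precisely the proof of that Sobolev embedding), so the content is equivalent; your version avoids the explicit kernel computation but requires the small observation that $\Lambda_{\mathcal J_1}S^*$ is the adjoint of the operator obtained by multiplying the amplitude by $\prod_{i\in\mathcal J_1}(1+|\xi_i|^2)^{(n_i-1)/4}$ --- no genuine FIO composition calculus is needed, since $\Lambda_{\mathcal J_1}$ is a Fourier multiplier and enters $S$ on the frequency side. Both arguments rest on the same unproved-in-detail input, namely the uniform $L^2$-boundedness of product FIOs with order-$\vec 0$ product symbols and separated non-degenerate phases, which you at least sketch via a blockwise operator-valued amplitude argument where the paper simply cites the one-parameter result.
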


\begin{proof}

Now we write 
\begin{equation}
\begin{array}{lc}\displaystyle
L^N \Tilde{ G}^\nu_{j_\mathcal{I}} a_R (x, y_\mathcal{I}, \xi_\mathcal{I} ) 
 =  \int_{ \mathbb{R}^{n_\mathcal{J}} }
 e^{2\pi  i  \Phi_\mathcal{J}(x_\mathcal{J}, \xi_\mathcal{J}) }
 \widehat{a_R}(y_\mathcal{I}, \xi_\mathcal{J}  ) L^N\Big\{ e^{2\pi i  \Psi_{\I}(x_\I,\xi_\I) } \sigma(x,\xi) 
\phi_{ j_\mathcal{I} }(\xi_{\mathcal{I}})    \chi^\nu_{j_\mathcal{I}}(\xi_{\mathcal{I}})
\Big\}  d\xi_\mathcal{J} 
\\\\ \displaystyle
=
\int_{ \mathbb{R}^{n_\mathcal{J}} }
 e^{2\pi  i  \Phi_\mathcal{J}(x_\mathcal{J}, \xi_\mathcal{J}) }
 L^{N} \Big\{   e^{2\pi i \Psi_\I(x_\I,\xi_\I) } \sigma(x,\xi) 
\prod_{i\in \mathcal{J}}(1+|\xi_i|^2)^{\frac{n_i-1}{4}}   \phi_{j_\mathcal{I}}(\xi_\mathcal{I})  
 \chi^\nu_{j_\mathcal{I}}(\xi_{\mathcal{I}})
\Big\}   \widehat{Sa_R}(y_\mathcal{I}, \xi_\mathcal{J}  ) 
 d\xi_\mathcal{J} 
\\\\ \displaystyle
\end{array}
\end{equation}
where 
\[
\widehat{S a_R}(y_\mathcal{I}, \xi_\mathcal{J} )=  \widehat{a_R}(y_\mathcal{I}, \xi_\mathcal{J}  ) \prod_{i\in \mathcal{J}}  (1+|\xi_i|^2 )^{-\frac{n_i-1}{4}}.
\]
Then we see $ |\widehat{S a_R}(y_\mathcal{I}, \xi_\mathcal{J} )|\le  | \widehat{a_R}(y_\mathcal{I}, \xi_\mathcal{J}  ) \prod_{i\in \mathcal{J}_1'}  (1+|\xi_i|^2 )^{-\frac{n_i-1}{4}}|$, where $\mathcal{J}_1'= \mathcal{J}_1 \cap \{i\in \mathcal{J} : n_i\ge 2 \}$ and $\mathcal{J}_2'=\mathcal{J} \setminus \mathcal{J}_1'. $
If we use Plancherel's theorem in the $\xi_{\mathcal{J}_2}$ space, we have
\[
\begin{array}{lc} \displaystyle
\Big\{
\int_{\mathbb{R}^{n_\mathcal{J}} }
\Big| S a_R(y_\mathcal{I}, x_\mathcal{J} )\Big |^2 dx_\mathcal{J} \Big\}^{\frac{1}{2}}
= \Big\{
\int_{\mathbb{R}^{n_\mathcal{J}} }
\Big| \widehat{a_R}(y_\mathcal{I}, \xi_\mathcal{J}  ) \prod_{i\in \mathcal{J}}  (1+|\xi_i|^2 )^{-\frac{n_i-1}{4}}  \Big|^2 
d \xi_\mathcal{J} \Big\} ^{\frac{1}{2}}
\\\\ \displaystyle
\le 
\Big\{
\int_{\mathbb{R}^{n_\mathcal{J}} }
\Big| \widehat{a_R}(y_\mathcal{I}, \xi_\mathcal{J}  ) \prod_{i\in \mathcal{J}'_1}  (1+|\xi_i|^2 )^{-\frac{n_i-1}{4}}  \Big|^2 
d \xi_\mathcal{J} \Big\} ^{\frac{1}{2}}
\\\\ \displaystyle
\le 
\Big\{
\int_{\mathbb{R}^{n_{\mathcal{J}'_1} } }
\int_{\mathbb{R}^{n_{\mathcal{J}'_2}} }
\Big| \widehat{a_R}(y_\mathcal{I}, \xi_{\mathcal{J}'_1} ,x_{\mathcal{J}'_2}     ) \prod_{i\in \mathcal{J}'_1}  (1+|\xi_i|^2 )^{-\frac{n_i-1}{4}}  \Big|^2 
d x_{\mathcal{J}'_2} 
d \xi_{\mathcal{J}'_1} \Big\} ^{\frac{1}{2}}
\\\\ \displaystyle
 \le 
 \Big\{
 \int_{\mathbb{R}^{n_{\mathcal{J}'_2}} }
 \int_{\mathbb{R}^{n_{\mathcal{J}'_1} } }
 \Big| a_R(y_\mathcal{I}, y_{\mathcal{J}'_1},  x_{\mathcal{J}'_2}) \mathcal{R} (x_{\mathcal{J}'_1} -y_{\mathcal{J}'_1}) \Big|^2
 d \xi_{\mathcal{J}'_1} 
 d x_{\mathcal{J}'_2} 
 \Big\} ^{\frac{1}{2}}

 \end{array}
\]
where 
\[
\mathcal{R} (x_{\mathcal{J}'_1})=  \int_{\mathbb{R}^{ n_{\mathcal{J}'_1}}  } 
e^{ 2 \pi  i  x_{\mathcal{J}'_1}\cdot \xi_{\mathcal{J}'_1} }  
 \prod_{i\in \mathcal{J}'_1}  (1+|\xi_i|^2 )^{-\frac{n_i-1}{4}}
d\xi_{\mathcal{J}'_1},
\]
We claim that 
\[
\Big|  \mathcal{R} (x_{\mathcal{J}'_1})  \Big|\le 
C
  \prod_{i\in\mathcal{J}'_1} |x_i|^{-n_i(1-(n_i-1)/2n_i)}.
\]
We denote 
\[
\mathcal{R}_{j_i} (x_i)=   \int_{\mathbb{R}^{ n_i} }
e^{ 2 \pi  i  x_i \cdot \xi_i }  
(1+|\xi_i|^2 )^{-\frac{n_i-1}{4}} \phi_{j_i} (\xi_i)
d\xi_i, \qquad i\in \mathcal{J}'_1.
\]
From the definition of $\phi_{j_i}$, we see for $n_i$-dimensional multi-indices $\alpha_i$,
\begin{equation}\label{differential ineq}
\Big| 
\partial_{\xi_i}^{\alpha_i} (1+|\xi_i|^2)^{-\frac{n_i-1}{4}} \phi_{j_i}(\xi_i)  \Big|\le C 2^{-j_i (n_i-1)/2} 2^{-j_i|\alpha_i|}.
\end{equation}
An $M_i$-fold integration by parts with respect to $\xi_i$  gives,

\begin{equation}\label{kernel size}
    \begin{array}{lr}\displaystyle
|\mathcal{R}_{j_i}(x_i) | \le 
\Big|  
  \int_{\mathbb{R}^{ n_i} } 
e^{ 2 \pi  i  x_i \cdot \xi_i  }  
  (1+|\xi_i|^2 )^{-\frac{n_i-1}{4}}
d\xi_i
\Big| 
\\\\ \displaystyle
\leq C_{M_i}
|x_i|^{-M_i} 
\Big |
  \int_{\mathbb{R}^{ n_i  } }
e^{ 2 \pi  i   x_i \cdot \xi_i }  
\partial_{\xi_i}^{M_i} [
  (1+|\xi_i|^2 )^{-\frac{n_i-1}{4}}]
d\xi_i
\Big|
\\\\ \displaystyle
\le C_{M_i}
2^{-j_i (n_i -1)/2 }   \Big(2^{j_i}|x_i| \Big)^{-M_i}2^{j_i n_i}
 \\\\ \displaystyle
\le C_{M_i}   \Big( 2^{j_i}|x_i|\Big)^{-M_i}2^{ j_in_i (1-(n_i-1)/2n_i)},
\end{array}
\end{equation}
We choose 
\begin{equation}\label{choose parameter}
    \begin{array}{cc}\displaystyle
M_i=0 \quad \hbox{if} \quad |x_i|\leq 2^{-j_i}\quad \hbox{or}\quad M_i=n_i  \quad \hbox{if} \quad  |x_i|>2^{-j_i}, \quad i\in \mathcal{J}'_1.
\end{array}
\end{equation}
From (\ref{kernel size}) and (\ref{choose parameter}) and note that  $n_i\ge 2$ for $i\in \mathcal{J}'_1$, we have

\begin{equation}
  \begin{array}{lr}\displaystyle
|\sum_{j_i \ge 0} \mathcal{R}_{j_i}(x_i)|
\le   C_{M} 
\sum_{j_i \ge 0}\Big\{ \Big(2^{j_i}|x_i|  \Big)^{-M_i}2^{j_in_i (1-(n_i-1)/2n_i)}\Big\}
\\\\ \displaystyle
~\leq~C  \Big \{\Big(
{\frac{1}{|x_i|}}\Big)^{n_i\Big(1-{\frac{(n_i-1)}{2n_i}}
\Big)}+\Big({\frac{1}{|x_i|}} 
\Big)^{n_i} \sum_{|x_i|>2^{-j_i}} 2^{ -j_i(n_i-1)/2} \Big\}
\le C 
\Big({\frac{1}{|x_i|}}\Big)^{n_i\Big(1-{\frac{n_i-1}{2n_i}}\Big)},
\end{array}  
\end{equation}
which proves our claim.

Now  applying Hardy-Littlewood-Sobolev inequality on every coordinate subspace and using Minkowski integral inequality, we have
\[
\begin{array}{lc} \displaystyle
\Big\{
\int_{\mathbb{R}^{n_\mathcal{J}} }
\Big| S a_R(y_\mathcal{I}, x_\mathcal{J} )\Big |^2 dx_\mathcal{J} \Big\}^{\frac{1}{2}}
= \Big\{
\int_{\mathbb{R}^{n_\mathcal{J}} }
\Big| \widehat{a_R}(y_\mathcal{I}, \xi_\mathcal{J}  ) \prod_{i\in \mathcal{J}}  (1+|\xi_i|^2 )^{-\frac{n_i-1}{4}}  \Big|^2 
d \xi_\mathcal{J} \Big\} ^{\frac{1}{2}}
\\\\ \displaystyle
 \le C
 \Big\{
 \int_{\mathbb{R}^{n_{\mathcal{J}'_2}} }
 \int_{\mathbb{R}^{n_{\mathcal{J}'_1} } }
 \Big| a_R(y_\mathcal{I}, y_{\mathcal{J}'_1},  x_{\mathcal{J}'_2}) 
\prod_{i\in \mathcal{J}'_1}|x_i-y_i|^{-n_i \Big(1-\frac{n_i-1}{2n_i} \Big)}
 \Big|^2
 d y_{\mathcal{J}'_1} 
 d x_{\mathcal{J}'_2} 
 \Big\} ^{\frac{1}{2}}
 \\\\ \displaystyle
  \le C \int_{\mathbb{R}^{n_{\mathcal{J}_2}} }
   \Big\{
 \int_{\mathbb{R}^{n_{i_{\mathcal{J}_1 }} } }
 \cdots
  \Big\{
 \int_{\mathbb{R}^{n_{i_2} } }
 \Big\{
 \int_{\mathbb{R}^{n_{i_1} } }
 \Big| a_R(y_\mathcal{I},  x_\mathcal{J}) 
 \Big|^{p_{i_1}} d x_{{i_1}}
 \Big\}^{\frac{p_{i_2}}{p_{i_1}}}
 d x_{{i_2}}
 \Big\}^{\frac{p_{i_3}}{p_{i_2}}}
 \cdots
  d x_{  i_{\mathcal{J}_1 } }
 \Big\}^{\frac{2 }{p_{i_{ \mathcal{J}_1}}}}
  d x_{\mathcal{J}_2} 
 \Big\} ^{\frac{1}{2}}

 \\\\ \displaystyle
 \quad \quad \textit{for} \quad  p_{i_k}=\frac{2n_{i_k}}{2n_{i_k}-1}, \quad  \mathcal{J}_1=\{i_1,i_2, \dots, i_{\mathcal{J}_1}\}
 \\\\ \displaystyle
:=C   \Big\| a_R(y_\mathcal{I}, \cdot,  \cdot) 
 \Big \|_{L_{x_{\mathcal{J}_1}}^{p_{\mathcal{J}_1}}  L^2_{x_{\mathcal{J}_2 }}}.
 \end{array}
\]
Now applying H\"older inequality, as a result,
\begin{equation}\label{multiplier estimate}
\Big\{
\int_{\mathbb{R}^{n_\mathcal{J}} }
\Big| S a_R(y_\mathcal{I}, x_\mathcal{J} )\Big |^2 dx_\mathcal{J} \Big\}^{\frac{1}{2}}
\le C 
\prod_{i\in \mathcal{J}_1} |I_i|^{\frac{1}{2} -\frac{1}{2n_i}}
\Big\{
\int_{\mathbb{R}^{n_\mathcal{J}} }
\Big|  a_R(y_\mathcal{I}, x_\mathcal{J} )\Big |^2 dx_\mathcal{J} \Big\}^{\frac{1}{2}}
\end{equation}

Note that
\begin{equation}\label{symbol estimate}
\begin{array}{lc} \displaystyle
\prod_{ i\in \mathcal{I}} 2^{j_i\frac{n_i-1}{2}}
 L^{N} \Big\{  e^{2\pi  i  \Psi_\I(x_\I,\xi_\I) } \sigma(x,\xi) 
\prod_{i\in \mathcal{J}}(1+|\xi_i|^2)^{\frac{n_i-1}{4}}   \phi_{j_\mathcal{I}}(\xi_\mathcal{I})  
 \chi^\nu_{j_\mathcal{I}}(\xi_{\mathcal{I}})\Big\} 
 \\\\ \displaystyle
 \in S^{\vec m}(  \mathbb{R}^{n_{i_1}} \times \mathbb{R}^{n_{i_2}} \times\cdots\times \mathbb{R}^{n_{i_\mathcal{J}}}) , ~\vec m=(0, 0, \cdots, 0) \in \mathbb{R} ^{|\mathcal{J}|}, ~\mathcal{J}=\{i_1, i_2, \dots, i_{\mathcal{J}} \},
 \end{array}
 \end{equation}
hence,  by the $L^2$-boundedness of Fourier integral operators and Plancherel's theorem we have
\begin{equation}
\begin{array}{lc} \displaystyle
 \Big \{
 \int_{ \mathbb{R}^{n_\mathcal{J}} } 
\Big|
L^N\Tilde{G}^\nu_{j_\mathcal{I}} a_R (x, y_\mathcal{I}, \xi_\mathcal{I} ) 
\Big|^2
dx_{\mathcal{J}} \Big\}^{\frac{1}{2}}
\le C 
\prod_{ i\in \mathcal{I}} 2^{-j_i\frac{n_i-1}{2}}
\Big\{
\int_{\mathbb{R}^{n_\mathcal{J}} }
\Big| S a_R(y_\mathcal{I}, x_\mathcal{J} )\Big |^2 dx_\mathcal{J} \Big\}^{\frac{1}{2}}
\\\\ \displaystyle
\le  C 
\prod_{ i\in \mathcal{I}} 2^{-j_i\frac{n_i-1}{2}}
 \prod_{i\in \mathcal{J}_1} |I_i|^{\frac{1}{2} -\frac{1}{2n_i}}
\Big\{
\int_{\mathbb{R}^{n_\mathcal{J}} }
\Big|  a_R(y_\mathcal{I}, x_\mathcal{J} )\Big |^2 dx_\mathcal{J} \Big\}^{\frac{1}{2}}
\\\\ \displaystyle
= C 
\prod_{ i\in \mathcal{I}} 2^{-j_i\frac{n_i-1}{2}}
 \prod_{i\in \mathcal{J}_1} |I_i|^{\frac{1}{2} -\frac{1}{2n_i}}
\Big\|  a_R(y_\mathcal{I}, \cdot)\Big \|_{L^2(\mathbb{R}^{n_\mathcal{J}})}.
\end{array}
\end{equation}

\end{proof}

\section{Further estimates  on rectangular Hardy space}
In this section, we consider the multi-parameter H\"ormander class $S^\vm_{ \rho, \delta} (\R^\vn ) $,  where $\vm=(m_1, m_2, \dots, m_d)$.
Let a symbol $\sigma(x,\xi) \in \mathcal{C}^\infty (\R^n \times \R^n )$ and $n=n_1+n_2+\cdots+n_d, d\ge 2$. Then we say $\sigma \in S^\vm_{ \rho,  \delta} (\R^\vn )$ if it satisfies the estimates
\begin{equation}\label{multi-parameter class}
\left|\p_\xi^\alpha\p_{x}^\beta \sigma(x,\xi)\right|~\leq~C_{\alpha, \beta}  \prod_{i=1}^d \left(1+|\xi_i|\right)^{m_i- \rho|\alpha_i|+\delta|\beta_i| }
\end{equation}
for every multi-indices $\alpha,\beta$.  Here $\xi=(\xi_1, \xi_2, \dots, \xi_d)\in \R^\vn $ and 
 $x=(x_1, x_2, \dots, x_d)\in \R^\vn $. we see that  $S^\vm (\R^\vn )$  is exactly  $S^\vm_{1, 0} (\R^\vn  )$.

Suppose  now each $\nabla_{\xi_i} \nabla_{\xi_i}    \Phi_i$ has constant rank, we prove the following analogue of one-parameter setting.
\begin{theorem}\label{constant rank theorem}
Let $T$ be  defined as Theorem \ref{main theorem} and  for each $i=1,2, \dots,d$, the  phase function $\Phi_i(x_i, \xi_i)$ satisfies that  the rank of Hessian $ \nabla_{\xi_i} \nabla_{\xi_i} \Phi_i(x_i, \xi_i)$ is ~$t_i, ~0\le t_i \le n_i-1 $ for $x_i$ fixed.
Let ~$ m_i=-t_i/2-(n_i-t_i)(1-\rho)/2$ and  $1/2\le \rho \le 1$ for each $i$ and $\sigma \in S^\vm_{\rho, 1-\rho} (\R^\vn )$,
then we have
\begin{equation}
|| Tf ||_{L^1(\R^\vn )} \le C_{\sigma, \Phi} || f ||_{H_{rect}^1(\R^\vn )}.
\end{equation}
\end{theorem}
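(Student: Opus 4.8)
The plan is to mirror, closely, the five-step argument that proves Theorem~\ref{main theorem} — atomic reduction, multiple Littlewood--Paley decomposition, a second dyadic (Seeger--Sogge--Stein) decomposition, a mixed $L^1$--$L^2$ majorization of the kernels, and the resummation — the only structural change being that the second dyadic decomposition is replaced by an \emph{anisotropic} one, adapted in each factor $\R^{n_i}$ to the rank $t_i$ of the Hessian and to the scale $2^{j_i(1-\rho)}$ that is intrinsic to the class $S^\vm_{\rho,1-\rho}(\R^\vn)$. First I would reduce, exactly as in the proof of Theorem~\ref{main theorem}, to the uniform bound $\|Ta_R\|_{L^1(\R^n)}\le\mathfrak C$ over rectangle atoms $a_R$ on $R=\prod_iI_i$ with $|R|<1$; the indices with side length $r_i\ge1$ are absorbed by $\|Ta_R\|_{L^1}\le C\|Ta_R\|_{L^2}\le C\|a_R\|_{L^2}$, which is legitimate because an FIO with phase $\sum_i\Phi_i$ and symbol in $S^{\vec 0}_{\rho,1-\rho}$ — a fortiori one of the negative order $\vm$ in question — is $L^2$-bounded for every $1/2\le\rho\le1$ (no curvature hypothesis on the $\Phi_i$ is needed for $L^2$, and at the endpoint $\rho=1/2$ this is the Calderón--Vaillancourt/Fujiwara estimate for the borderline class). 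One then introduces the index sets $\K,\I,\J,\J_1,\J_2$ and forms $T=\sum_{j_\I\ge0}T_{j_\I}$ in the frequencies $\xi_i$, $i\in\I$.

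The heart of the matter is the modified second dyadic decomposition. Fix $i\in\I$ and a level $j_i>0$. Since $\nabla_{\xi_i}\nabla_{\xi_i}\Phi_i(x_i,\cdot)$ has the constant rank $t_i$, its kernel is an $(n_i-t_i)$-dimensional smooth distribution whose leaves are affine and along which $\Phi_i$ is affine in $\xi_i$; hence, after a partition of unity on $\mathbb{S}^{n_i-1}$ and rotations in $\xi_i$ and $x_i$, one may split the spherical directions into $t_i$ ``curved'' ones — on which the induced spherical Hessian is elliptic — and $n_i-1-t_i$ ``flat'' ones. I would then tile the dyadic shell $\{|\xi_i|\sim2^{j_i}\}$ into plates $\Gamma_{j_i}^{\nu_i}$ of angular size $\sim2^{-j_i/2}$ in each curved direction and $\sim2^{-j_i(1-\rho)}$ in each flat direction (note $2^{-j_i(1-\rho)}\ge2^{-j_i/2}$ precisely when $\rho\ge1/2$), with a subordinate partition of unity $\chi_{j_i}^{\nu_i}$ carrying the matching anisotropic derivative bounds. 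Setting $\Psi_i(x_i,\xi_i)=\Phi_i(x_i,\xi_i)-\nabla_{\xi_i}\Phi_i(x_i,\xi_{j_i}^{\nu_i})\cdot\xi_i$, the affineness of $\Phi_i$ along the kernel leaves forces $\Psi_i$ to depend, on the support of a plate, only on the curved variables, where the curved and radial bounds of \eqref{Psi estimate} persist; in the flat directions it is then solely the symbol regularity $\sigma\in S^\vm_{\rho,1-\rho}$, at its natural scale $2^{j_i(1-\rho)}$, that controls the full amplitude $\Theta_{j_\I}^{\nu}=e^{2\pi i\Psi_\I}\sigma\,\phi_{j_\I}\chi_{j_\I}^{\nu}$, so that the anisotropic integration-by-parts operator $L$ built from $2^{j_i/2}\nabla_{\xi_i,\mathrm{curv}}$, $2^{j_i\rho}\nabla_{\xi_i,\mathrm{flat}}$ and $2^{j_i}\partial_{|\xi_i|}$ may be applied repeatedly.

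With the anisotropic plates in hand the remaining steps follow the template of Lemmas~\ref{rectangular majorization} and \ref{L^2 estimate}. The region of influence $R_{j_i}^{\nu_i}$ is now the $x_i$-plate dual to $\Gamma_{j_i}^{\nu_i}$, of measure $\sim2^{-j_i}2^{-j_it_i/2}2^{-j_i\rho(n_i-1-t_i)}$, and counting the $\sim2^{j_it_i/2}2^{j_i(1-\rho)(n_i-1-t_i)}$ plates gives
\[
|Q_i|\le C\sum_{j_i\ge k_i}2^{-j_i}2^{j_i(1-2\rho)(n_i-1-t_i)}\le C\,r_i,
\]
which converges exactly because $\rho\ge1/2$. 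One then proves the analogues of Lemmas~\ref{L^2 estimate} and \ref{rectangular majorization}: Plancherel in the undecomposed variables $\xi_\J$ together with the Hardy--Littlewood--Sobolev step — applied with the Bessel exponents dictated by $m_i=-t_i/2-(n_i-t_i)(1-\rho)/2$ — produces the factors $\prod_{i\in\J_1}|I_i|^{-m_i/n_i}$, while for $i\in\I$ the symbol decay $2^{j_im_i}$, the plate count, the $L^2$-Plancherel gain and the moment cancellation of the atom (used as in \eqref{Omega deformation}) combine to a summable product $\prod_{i\in\I_1}2^{-(j_i-k_i)}\prod_{i\in\I_2}2^{(j_i-k_i)}$ up to harmless powers. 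Summing over $j_\I$ and over the finitely many choices of $\I$ and of the splitting $\I=\I_1\cup\I_2$ yields $\|Ta_R\|_{L^1}\le\mathfrak C|R|^{1/2}\|a_R\|_{L^2}\le\mathfrak C$.

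The step I expect to be the main obstacle is precisely the anisotropic second dyadic decomposition and the attendant phase/symbol analysis: one must reconcile, inside a single integration by parts and a single region-of-influence count, the curvature scale $2^{-j_i/2}$ of the $t_i$ elliptic directions with the symbol scale $2^{-j_i(1-\rho)}$ of the $n_i-1-t_i$ flat directions — and it is here that the hypothesis $\rho\ge1/2$ is indispensable and that the constant-rank (``affine along the kernel leaves'') structure of $\Phi_i$ does the work of reducing the flat directions to a pseudodifferential-type analysis. Verifying afterwards that the bookkeeping of orders closes, with $m_i=-t_i/2-(n_i-t_i)(1-\rho)/2$ being the sharp value, is intricate but routine; a secondary point is supplying the $L^2$-boundedness input at the borderline $\rho=1/2$.
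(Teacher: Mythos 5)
Your overall architecture (atomic reduction, Littlewood--Paley in $\xi_\I$, a rank-adapted second dyadic decomposition, mixed $L^1$--$L^2$ majorization, resummation) is the right one, but the specific anisotropic decomposition you propose contains a genuine error in the radial direction, and it propagates into your region-of-influence count. You build the integration-by-parts operator $L$ from $2^{j_i/2}\nabla_{\xi_i,\mathrm{curv}}$, $2^{j_i\rho}\nabla_{\xi_i,\mathrm{flat}}$ \emph{and} $2^{j_i}\partial_{|\xi_i|}$, and correspondingly give each dual plate $R_{j_i}^{\nu_i}$ a radial side of length $2^{-j_i}$. For the phase correction $\Psi_i$ this is fine (it is homogeneous of degree one, so radial derivatives do gain $2^{-j_i}$), but for the symbol it is not: $\sigma\in S^{\vm}_{\rho,1-\rho}$ only gives $|\partial_{\xi_i}\sigma|\lesssim 2^{j_i(m_i-\rho)}$ in \emph{every} $\xi_i$-direction, so $2^{j_i}\partial_{|\xi_i|}$ applied to the amplitude produces an unbounded factor $2^{j_i(1-\rho)}$ whenever $\rho<1$. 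The radial direction must therefore be grouped with the flat normal directions at the dual scale $2^{-j_i\rho}$. This is not merely cosmetic: with the correct boxes one gets $|Q_i|\le C\,r_i^{\rho(n_i-t_i)}$ rather than your $|Q_i|\le C\,r_i$, and it is precisely the exponent $\rho(n_i-t_i)$ that makes the bookkeeping close, since $|Q_i|^{1/2}\,|I_i|^{-m_i/n_i}\,|I_i|^{-1/2}=r_i^{\rho(n_i-t_i)/2+t_i/2+(n_i-t_i)(1-\rho)/2-n_i/2}=r_i^{0}$ with $m_i=-t_i/2-(n_i-t_i)(1-\rho)/2$. With your $|Q_i|\le C\,r_i$ the exponent becomes $\tfrac12-\rho(n_i-t_i)/2$, which is negative (hence unbounded as $r_i\to0$) as soon as $\rho(n_i-t_i)>1$.

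The paper's proof also diverges from yours in that it does \emph{not} tile the flat angular directions at all: the second dyadic decomposition is performed only along the $t_i$-dimensional ``curved'' parameter $u_i$ of the submanifold $S_{t_i}(x_i)$, at angular scale $2^{-j_i/2}$, giving only $N_{t_i}(j_i)\sim 2^{j_it_i/2}$ pieces; the $n_i-t_i$ normal directions (flat angular plus radial) are handled uniformly at scale $2^{-j_i\rho}$ by integration by parts using only the $S_{\rho,1-\rho}$ regularity, together with an auxiliary frequency cutoff $\hat g_{j_i}$ and the $L^2$-boundedness of operators with symbols in $S^{\vec 0}_{\rho,1-\rho}$, $1/2\le\rho\le1$ (this is where $\rho\ge1/2$ enters, not through convergence of a plate count). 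Your extra tiling of the flat directions at scale $2^{-j_i(1-\rho)}$ inflates the number of pieces by $2^{j_i(1-\rho)(n_i-1-t_i)}$, and you would then have to recover exactly this factor per piece in the final $\sum_\nu$; you flag this reconciliation as the main obstacle, and indeed as written it is not carried out. I recommend abandoning the flat-direction tiling and the $2^{j_i}\partial_{|\xi_i|}$ term, and adopting the uniform $2^{-j_i\rho}$ normal scale throughout.
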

\begin{remark}
  If $t_i=0$ for $1\le i\le d$, then $T$ is essentially a pseudo-differential operator, suppose  $\sigma(x, \xi) \in S^\vm_{ \rho,  \rho}$, where $m_i=0,~0\le \rho<1$ for each $i=1,2,\cdots,d$.    Journ\'e in the end of the paper  \cite{J85} asked whether   the operator $T$  is bounded from $L^2(\R^n)$ to itself, then Carbery and Seeger \cite{CS88} answered this question affirmatively.  
If in addition $\sigma \in S^{\vm}_{1,  \delta }$, where $m_i=0, 0 \le \delta \le 1$ for $1\le i\le d$,   Carbery and Seeger \cite{CS92} showed that $T$ is bounded from $H^1(\R^\vn)$ to $L^1(\R^n)$, however, for $\rho<1$, this problem seems entirely open. 
\end{remark}

From the above section we see that it suffices to show 
\begin{lemma}\label{constant rank majorization}
Suppose that $\sigma(x,\xi) $ be defined as in Theorem \ref{constant rank theorem} and we use the same notation as the above section.  Then we have
\begin{equation}
\begin{array}{lc}\displaystyle
|Q_{\J_1}|^{\frac{1}{2}}
\int_{Q_\I^c} 
\left\{ 
 \int_{\R^{n_\J}} |T_{j_\I} a_R (x_\I, x_\J) |^2  dx_\J \right\}^{\frac{1}{2}} dx_\I 
  \le  C \prod_{i\in \mathcal{I}_1}2^{-j_i+k_i} \prod_{i\in \mathcal{I}_2}2^{j_i-k_i}.
\end{array}
\end{equation}
Here the region of influence is defined by (\ref{region of influence}).
\end{lemma}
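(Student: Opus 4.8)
The plan is to mimic the proof of Lemma~\ref{rectangular majorization}, carrying through the Seeger--Sogge--Stein decomposition but now accommodating the degenerate Hessian by splitting the $\xi_i$-variables according to the rank $t_i$. First I would set up, for each $i\in\mathcal I$, a coordinate system in $\xi_i$-space in which the $t_i$ directions where $\nabla_{\xi_i}\nabla_{\xi_i}\Phi_i$ is non-degenerate are separated from the $n_i-t_i$ ``flat'' directions; the Seeger--Sogge--Stein caps are then taken with angular width $2^{-j_i/2}$ in the non-degenerate directions and with the coarser width $2^{-j_i(1-\rho)/2}$ (dictated by $\rho$) in the flat directions. With this choice there are at most $C\,2^{j_i(t_i/2+(n_i-t_i)(1-\rho)/2)}=C\,2^{-j_i m_i}$ caps $\varphi_{j_i}^{\nu_i}$, and on each cap the phase $\Psi_i(x_i,\xi_i)=\Phi_i(x_i,\xi_i)-\nabla_{\xi_i}\Phi_i(x_i,\xi_{j_i}^{\nu_i})\cdot\xi_i$ satisfies the analogue of \eqref{Psi estimate}: $|\partial_{\xi_i''}^N\Psi_i|\le C_N 2^{-Nj_i}$ in the good directions and $|\partial_{\xi_i'}^\alpha\Psi_i|\le C_\alpha 2^{-|\alpha|j_i(1-\rho)}$ (up to lower-order curvature terms that are harmless after a further dyadic subdivision in $|\xi_i|$) in the flat ones. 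Correspondingly, the region of influence $R_{j_i}^{\nu_i}$ — and hence $Q_i$ via \eqref{region of influence} — is the rectangle of dimensions $2^{-j_i}$ in one direction, $2^{-j_i/2}$ in $t_i-$ wait, more precisely $2^{-j_i}$-by-$\cdots$ with the flat directions contributing the weaker localization; the key point is still $|Q_i|\le C r_i$, which is exactly what the definition of $k_i$ and the geometric sum over $j_i\ge k_i$ give.

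Next I would introduce the differential operator $L=L_{\xi_{\mathcal I}}$ adapted to these anisotropic caps, namely $L=\prod_{i\in\mathcal I}(I-2^{j_i(1-\rho)}\nabla_{\xi_i'}\cdot 2^{j_i(1-\rho)}\nabla_{\xi_i'})(I-2^{j_i}\nabla_{\xi_i''}\cdot 2^{j_i}\nabla_{\xi_i''})$, and verify, using the symbol bound \eqref{multi-parameter class} with $\rho,\delta=1-\rho$ together with the cap estimates and the $\Psi_i$-bounds, that $|L^N\Theta_{j_{\mathcal I}}^\nu(x,\xi)|\le C_N\prod_{i\in\mathcal I}2^{j_i m_i}$, while the $\xi_{\mathcal I}$-support of $\Theta_{j_{\mathcal I}}^\nu$ has measure $\le C\prod_{i\in\mathcal I}2^{j_i(n_i-t_i)\rho/2+\cdots}$ — the bookkeeping being arranged so that the product of these two factors with the $L^2$ gain $2^{-j_i m_i}$ from Lemma~\ref{L^2 estimate} collapses to an absolute constant. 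Integration by parts in $y_{\mathcal I}$ then produces the decay factors $\{1+2^{j_i}|(\nabla_{\xi_i}\Phi_i-y_i)''|\}^{-2N}$ and $\{1+2^{j_i(1-\rho)}|(\nabla_{\xi_i}\Phi_i-y_i)'|\}^{-2N}$, exactly as in \eqref{Omega size}. The off-region estimate then proceeds verbatim: since $x_i\in Q_i^c$ and $y_i\in B_{r_i}(\overline y_i)$ with $r_i\sim 2^{-k_i}$, one of the two quantities is $\gtrsim 2^{j_i-k_i}$ or $\gtrsim 2^{(j_i-k_i)(1-\rho)}$, and extracting one power of this largeness yields the gain $\prod_{i\in\mathcal I_1}2^{-j_i+k_i}$ (for $j_i>k_i$) while the moment-vanishing argument of \eqref{Omega deformation}, using $|\xi_i|\le C2^{j_i}$, yields $\prod_{i\in\mathcal I_2}2^{j_i-k_i}$ (for $j_i\le k_i$). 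Summing over the $\le C\prod_{i\in\mathcal I}2^{-j_i m_i}$ caps $\nu$ absorbs the remaining factor, and combining with the $L^2$ Lemma~\ref{L^2 estimate} (whose proof is insensitive to the value of $t_i$, as it only uses $\sigma\in S^{\vec 0}$ after peeling off the Bessel-potential factors in the $\mathcal J$-variables) gives the claimed bound, noting that the atom normalization $\|a_R\|_{L^2}\le|R|^{-1/2}$ together with $\prod_{i\in\mathcal J_1}|I_i|^{1/2-1/2n_i}\cdot|Q_{\mathcal J_1}|^{1/2}\lesssim\prod_{i\in\mathcal J_1}|I_i|^{1/2}$ and $r_i\ge 1$ for $i\in\mathcal J_2$ closes the estimate.

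The main obstacle I anticipate is the control of $\Psi_i$ in the $n_i-t_i$ flat directions: unlike the elliptic case, where the second fundamental form is uniformly non-degenerate and the Seeger--Sogge--Stein caps are geometrically round, here the Hessian degenerates and one must exploit that $\nabla_{\xi_i}\nabla_{\xi_i}\Phi_i$ has \emph{constant} rank $t_i$ to choose, smoothly in $\xi_i/|\xi_i|$, the splitting into good and flat directions, and then show that the higher-order Taylor remainder of $\Phi_i$ along the flat directions still obeys $|\partial_{\xi_i'}^\alpha\Psi_i|\lesssim 2^{-|\alpha|j_i(1-\rho)}$ on caps of width $2^{-j_i(1-\rho)/2}$ — this is the place where the precise relation $m_i=-t_i/2-(n_i-t_i)(1-\rho)/2$ and the constraint $\rho\ge 1/2$ enter, and it is essentially the multi-parameter transcription of the constant-rank argument in the one-parameter literature (cf.\ the discussion around \cite{SSS91} and \cite{S93}); once that single-parameter input is in hand, the tensor structure of $\Phi=\sum_i\Phi_i$ and of the symbol class makes the rest of the argument a direct product-space adaptation of Lemma~\ref{rectangular majorization}.
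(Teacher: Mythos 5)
Your proposal diverges from the paper's argument at the decomposition stage, and the divergence creates real gaps rather than a workable alternative. The paper does \emph{not} perform an anisotropic angular decomposition in the flat directions: it uses the constant-rank hypothesis to parameterize $\mathbb{S}^{n_i-1}\cap\Gamma_i$ by $(u_i,v_i)\in U_i\times V_i$ with $\nabla_{\xi_i}\Phi_i(x_i,\overline\xi_{x_i}(u_i,v_i))$ independent of $v_i$, decomposes into caps of width $2^{-j_i/2}$ \emph{only in the $t_i$ curved directions} (so only $N_{t_i}(j_i)\lesssim 2^{j_it_i/2}$ caps), and then handles the $n_i-t_i$ normal variables $\xi_i''$ with no further localization at all: it inserts the cutoff $\hat g_{j_i}(\xi_i'')=\hat g_i(2^{-j_i}\xi_i'')$, invokes the $L^2$-boundedness of operators with symbols in $S^{\vec 0}_{\rho,1-\rho}$ ($\tfrac12\le\rho\le1$), and pays the factor $\|g_{j_\I}\|_{L^2}\sim\prod_i 2^{j_in_i''/2}$, which combines with the $\xi_\I'$-support of measure $\prod_i 2^{j_in_i'/2}$ and the prefactor $2^{j_im_i}$ to give the bound $\prod_i2^{j_i(n_i/2+m_i)}$ on $F^\nu_{j_\I}$. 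Your scheme of caps of width $2^{-j_i(1-\rho)/2}$ in the flat directions, with $2^{-j_im_i}$ caps in total, is not what the lemma's region of influence (\ref{region of influence}) refers to, and the step you yourself flag as ``the main obstacle'' --- the bound $|\partial_{\xi_i'}^\alpha\Psi_i|\lesssim 2^{-|\alpha|j_i(1-\rho)}$ on such caps --- is precisely the point you defer to the literature without proof; in the paper this difficulty never arises because the gradient does not vary in $v_i$ and the only loss in the normal directions comes from the $S_{\rho,1-\rho}$ symbol, which is absorbed by the $L^2$ theory.

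The bookkeeping also does not close as written. You assert $|Q_i|\le Cr_i$, but the paper's rectangles $R^{\nu_i}_{j_i}$ have $t_i$ sides of length $C2^{-j_i/2}$ and $n_i-t_i$ sides of length $C2^{-j_i\rho}$, giving $|Q_i|\le C2^{-k_i\rho(n_i-t_i)}\le Cr_i^{\rho(n_i-t_i)}$; this exact exponent is what makes
\[
|Q_{\J_1}|^{\frac12}\prod_{i\in\J_1}|I_i|^{-\frac{m_i}{n_i}}
=\prod_{i\in\J_1}r_i^{\frac{\rho(n_i-t_i)}{2}+\frac{t_i}{2}+\frac{(n_i-t_i)(1-\rho)}{2}}
=\prod_{i\in\J_1}|I_i|^{\frac12}
\]
an identity, so that the atom normalization finishes the proof. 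With $|Q_i|\le Cr_i$ the corresponding exponent is $\tfrac12-m_i$, which is strictly less than $\tfrac{n_i}{2}$ whenever $t_i<n_i-1$ and $\rho$ is close to $1$, and since $r_i\le1$ on $\J_1$ the inequality then goes the wrong way. Similarly, if one actually computes $|Q_i|$ for your finer cap family (more caps, but weaker localization $2^{-j_i(1-\rho)}$ in the flat directions), the resulting exponent again fails to match $-m_i$ in general. So the proposal needs to be replaced by the paper's structure: angular decomposition only in the $u_i$-variables, rectangles with normal side length $2^{-j_i\rho}$, and the $g_{j_\I}$ / $S^{\vec 0}_{\rho,1-\rho}$--$L^2$ argument for the flat variables; the moment-cancellation step producing $\prod_{i\in\I_2}2^{j_i-k_i}$ and the final summation are then as in Lemma \ref{rectangular majorization}, as you correctly anticipate.
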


\begin{proof}
We can assume the symbol  $\sigma(x,\xi)$ is supported in a product  narrow cone $\prod_{i=1}^d \Gamma_i$ in $\xi$-space.
Since the rank of Hessian $ \nabla_{\xi_i} \nabla_{\xi_i}   \Phi_i(x_i, \cdot)$ is ~$t_i$, there is an $t_i$-dimensional submanifold ~$S_{t_i}(x_i)$ ~of ~$\mathbb{S}^{n_i-1} \cap \Gamma_i$, varying smoothly with $x_i$, such that $\mathbb{S}^{n-1} \cap \Gamma_i$ can be parameterized by $\overline{\xi}_i=\overline{\xi}_{x_i}(u_i,v_i)$ for $(u_i,v_i)$ in a bounded open set $U_i\times V_i$ near $(0,0)\in \mathbb{R}^{t_i} \times \mathbb{R}^{n_i -t_i-1}$; Furthermore, $\overline{\xi}_{x_i}(u_i,v_i) \in S_{t_i}(x_i)$ if and only if $v_i=0$, and  $ \nabla_{\xi_i} \Phi_i(x_i,\overline{\xi}_{x_i}(u_i,v_i))  =\nabla_{\xi_i} \Phi_i(x_i,\overline{\xi}_{x_i}(u_i, 0)) $.

For each $j_i>0$, we choose $u_{j_i}^{\nu_i}, ~{\nu_i} =1, 2. \dots, N_{t_i}(j_i)$, such that $|u_{j_i}^{\nu_i}-u_{j_i}^{\nu'_i}  | \ge C 2^{-j_i/2}$ for different $\nu_i, \nu'_i$, and such that $U_i$ is covered by balls with center $u_{j_i}^{\nu_i}$ and radius $ C  2^{-j_i/2}$. Note that $N_{t_i}(j_i)$ is bounded by  a constant multiple of  $2^{j_i t_i/2}$. 

Let $\Gamma_{j_i}^{\nu_i}$ denote the cone in the $\xi_i$-space given by 
\[
\Gamma_{j_i}^{\nu_i}=\Big\{\xi_{i} \in \mathbb{R}^{n_i}: \xi_i =s \overline{\xi}_{x_i}(u_i, v_i), \Big|  \frac{u_i}{|u_i|} -u_{j_i}^{\nu_i}\Big|\le 2\cdot 2^{-j_i /2}, s>0  \Big\}
\]
One also define the following partition of unity on $\Gamma_{j_i}^{\nu_i}$, to construct this,  take
\[
\Tilde{\eta}_{j_i}^{\nu_i}(u_i)=\varphi\Big( 2^{j_i/2} \Big(\frac{u_i}{|u_i|} -u_{j_i}^{\nu_i}\Big)\Big)
\]
and set 
\[
\Tilde{\chi}_{j_i}^{\nu_i}(u_i) :=\frac{\Tilde{\eta}_{j_i}^{\nu_i}(u_i)}{ \sum_{1}^{N_{t_i}(j_i)} \Tilde{\eta}_{j_i}^{\nu_i}(u_i)}.
\]
The corresponding homogeneous partition of unity $\chi_{j_i}^{\nu_i}$ is defined such that
\[
\chi_{j_i}^{\nu_i}(s \overline{\xi}_{x_i}(u_i, v_i) ) =\Tilde{\chi}_{j_i}^{\nu_i}(u_i),
\]
for $v_i\in V_i$ and $s>0$. For sufficiently large $C$ we shall now let $R_{j_i}^{\nu_i}$ be the set of all $x_i$ such that
\[
\Big| \Big<\overline{y}_i-\nabla_{\xi_i} \Phi_i(x_i, \overline{\xi}_{x_i}(u_{j_i}^{\nu_i}, 0)) , e_i'\Big> \Big|\le C 2^{-j_i/2}
\] 
for all unit vectors $e_i'$  which are tangential to $S_{t_i}(x_i)$ at $\overline{\xi}_{x_i}(u_{j_i}^\nu, 0  )$, and 
\[
\Big| \Big<\overline{y}_i-\nabla_{\xi_i} \Phi_i(x_i, \overline{\xi}_{x_i}(u_{j_i}^{\nu_i}, 0)) , e_i''\Big> \Big|\le C  2^{-j_i\rho}
\] 
for all unit vectors $e_i''$  which are normal  to $S_{t_i}(x_i)$ at $\overline{\xi}_{x_i}(u_{j_i}^{\nu_i}, 0  )$. Thus we see 
$R_{j_i}^{\nu_i}$ is now basically a rectangle having $n_i-t_i$ sides of length $ C  2^{-j_i \rho}$ and $t_i$ of length $ C  2^{-j_i/2}$. 

Now the \emph{region of influence} is defined as 
\[
Q_i= \bigcup_{j_i\ge k_i} \bigcup_{\nu=1}^{N_{t_i}(j_i)} R_{j_i}^{\nu_i}
\]
Then 
\[
|Q_i|\le \mathfrak{C}  2^{-k_i\rho(n_i-t_i)} \le \mathfrak{C} r_i^{\rho(n_i-t_i)}.
\]
Now  we define
\begin{equation}\label{region of influence}
Q_i^c=\mathbb{R}^{n_i} \setminus Q_i, ~~~~Q_{\mathcal{I}}^c= \bigotimes_{i\in\mathcal{I}} Q_i^c, ~~~~Q_{\mathcal{J}_1}= \bigotimes_{i\in\mathcal{J}_1} Q_i.
\end{equation}
Recall that 
\begin{equation}\label{T E connection}
\begin{array}{lc} \displaystyle
T_{j_\I}^\nu a_R (z_\I, x_\J)= \int_{\R^{n_\I}}\prod_{i\in \I}  \Big( 1+|2^{j_i/2}(x_i-y_i)' |^2 \Big)^{-N} \Big( 1+  |2^{j_i\rho} (x_i -y_i)'' |^2 \Big)^{-N} 
\\\\ \displaystyle
\times  \int_{\R^{n_\I}} e^{2\pi i (x_\I- y_\I) \cdot \xi_\I}  
L^N\Tilde{G}^\nu_{j_\mathcal{I}} a_R (z_\I, x_\J, y_\mathcal{I}, \xi_\mathcal{I} ) 
 d\xi_\mathcal{I} 
 dy_\mathcal{I} .
\end{array}
\end{equation}
where
\[
x_i=\nabla_{\xi_i} \Phi_i(z_i, \overline{ \xi}_{x_i}(u_{j_i}^{\nu_i}, 0) ), \qquad i\in \I.
\]
Denote 
\[
n_i'=t_i,~~n''_i=n_i-t_i,~~n'_{\I}=\sum_{i\in \I} n'_i,~~n''_{\I}=\sum_{i\in \I} n''_i,~~
\]
\[
\xi'_\I=(\xi'_i )_{i\in \I}\in \R^{n'_\I}, ~~\xi''_\I=(\xi''_i )_{i\in \I}\in \R^{n''_\I}, ~~x'_\I=(x'_i )_{i\in \I}\in \R^{n'_\I}, ~~x''_\I=(x''_i )_{i\in \I}\in \R^{n''_\I}.
\]
Now let 
\[
\begin{array}{lc} \displaystyle
F_{j_\I}^\nu a_R(z_\I, x_\J,  y_\I)=
  \int_{\R^{n_\I}} e^{2\pi i  x_\I\cdot \xi_\I}  
L^N\Tilde{G}^\nu_{j_\mathcal{I}} a_R (z_\I, x_\J, y_\mathcal{I}, \xi_\mathcal{I} ) 
 d\xi_\mathcal{I} 
 \\\\ \displaystyle
 =  \prod_{ i\in \mathcal{I}} 2^{j_i m_i} \int_{\R^{n_\I}} e^{2\pi  i x_\I \cdot \xi_\I} 
 \left\{
 \int_{\R^{n_\J}} e^{2\pi \i \phi_\J(x_\J, \xi_\J) } \Hat{a}_R(y_\I, \xi_\J) 
  \prod_{ i\in \mathcal{I}} 2^{-j_i m_i} \Theta_{j_\I}^\nu(z_\I, x_\J,\xi) d\xi_\J \right\}
  d\xi_\I
   \\\\ \displaystyle
 =  \prod_{ i\in \mathcal{I}} 2^{j_i m_i  } \int_{\R^{n'_\I}} e^{2\pi i x'_\I \cdot \xi'_\I} 
E_{j_\I}^\nu a_R( z_\I, x_\J, y_\I, \xi)
  d\xi'_\I,
 \end{array}
\]
where 
\[
\begin{array}{lc}\displaystyle
E_{j_\I}^\nu a_R( z_\I, x_\J, y_\I, \xi'_\I )=  \int_{\R^{n''_\I}}  \int_{\R^{n_\J}}  e^{2\pi i [ x''_\I \cdot \xi''_\I +\Phi_\J(x_\J, \xi_\J) ]} 
\Hat{a}_R(y_\I, \xi_\J) 
\\\\ \displaystyle
\times   \prod_{ i\in \mathcal{I}} 2^{-j_i m_i} \Theta_{j_\I}^\nu(z_\I, x_\J,\xi)   d\xi''_\I d\xi_\J. 
  \end{array}
\]
 Let $g_i$ be a smooth function defined on $\mathbb{R}^{n''_i } $,
such that $\Hat{g}_i(\xi_i'')=1$ for $|\xi_i''|\le 8$ and $\Hat{g}_i(\xi_i'')=0$ for $|\xi_i''|\ge 16$, let $\Hat{g}_{j_i}(\xi_i'')=\Hat{g}_i(2^{-j_i}\xi_i'' ) $ and $\Hat{g}_{j_\I}(\xi''_\I) =\prod_{i\in \I} \Hat{g}_{j_i}(\xi_i'')$.
 Then we have  
\[
\begin{array}{lc} \displaystyle
E_{j_\I}^\nu a_R( z_\I, x_\J, y_\I, \xi'_\I)= 
 \\\\ \displaystyle
 \int_{\R^{n''_\I}}  \int_{\R^{n_\J}}  e^{2\pi i [ x''_\I \cdot \xi''_\I +\Phi_\J(x_\J, \xi_\J) ]} 
\Hat{a}_R(y_\I, \xi_\J)  \Hat{g}_{j_\I}(\xi''_\I)
  \prod_{ i\in \mathcal{I}} 2^{j_i m_i} \Theta_{j_\I}^\nu(z_\I, x_\J,\xi)   d\xi''_\I d\xi_\J. 
 \end{array}
\]
Recall the estimate of $\Theta_{j_\I}^\nu$ in (\ref{symbol estimate})
\[
\begin{array}{lc} \displaystyle
\prod_{ i\in \mathcal{I}} 2^{-j_i m_i}
 L^{N} \Big\{  e^{2\pi i \Psi_\I(x_\I,\xi_\I) } \sigma(x,\xi) 
\prod_{i\in \mathcal{J}}(1+|\xi_i|^2)^{-m_i/2}   \phi_{j_\mathcal{I}}(\xi_\mathcal{I})  
 \chi^\nu_{j_\mathcal{I}}(\xi_{\mathcal{I}})\Big\} 
 \in S_{\rho,  1-\rho}^{\vec m}(  \mathbb{R}^\vn) ,
 \end{array}
\]
where $~\vec m=(0, 0, \cdots, 0) \in \mathbb{R} ^{d},   1/2 \le \rho\le1$. We see  pseudo-differential operators with symbol in the aforementioned  symbol class are  bounded on $L^2$, see  $2.5$, chapter VII  in \cite{SSS91},  Similarly, so do Fourier integral operators. Therefore,

\begin{equation}
\begin{array}{lc}\displaystyle
\left\{
\int_{\R^{n''_\I}} \int_{\R^{n_\J}} |E_{j_\I}^\nu a_R (z_\I, x_\J, y_\I , \xi'_\I) |^2 dx''_\I dx_\J \right\}^{\frac{1}{2}}
\\\\ \displaystyle
\le C ||g_{j_\I}||_{L^2 (\R^{n''_\I} )} \left\{ \int_{\R^{n_\J}} |\Hat{a}_R(y_\I, \xi_\J) \prod_{i\in \mathcal{J}}(1+|\xi_i|^2)^{m_i/2}   |^2 d\xi_\J  \right\}^{\frac{1}{2}} 
\\\\ \displaystyle
\le C ||g_{j_\I}||_{L^2 (\R^{n''_\I} )}
 \prod_{i\in \mathcal{J}_1} |I_i|^{-\frac{m_i}{n_i}}
\Big\{
\int_{\mathbb{R}^{n_\mathcal{J}} }
\Big|  a_R(y_\mathcal{I}, x_\mathcal{J} )\Big |^2 dx_\mathcal{J} \Big\}^{\frac{1}{2}} 
\\ ~~~~~~~\text{by repeating the proof of estimate of (\ref{multiplier estimate})}
\\\\ \displaystyle
\le C  \prod_{i\in \I} 2^{j_i n''_i/2}
 \prod_{i\in \mathcal{J}_1} |I_i|^{-\frac{m_i}{n_i}}
\Big\{
\int_{\mathbb{R}^{n_\mathcal{J}} }
\Big|  a_R(y_\mathcal{I}, x_\mathcal{J} )\Big |^2 dx_\mathcal{J} \Big\}^{\frac{1}{2}} 
\end{array}
\end{equation}
Observe that $E_{j_\I}^\nu a_R( z_\I, x_\J, y_\I, \xi'_\I )=0$ for all $\xi'_\I$ outside a set of measure bounded by $C \prod_{i \in \I} 2^{j_i n'_i/2}$, thus we have 
\begin{equation}\label{F estimate}
\begin{array}{lc}\displaystyle
\left\{
\int_{\R^{n''_\I}} \int_{\R^{n_\J}} |F_{j_\I}^\nu a_R (z_\I, x_\J, y_\I) |^2 dx''_\I dx_\J \right\}^{\frac{1}{2}}
 \\\\ \displaystyle
\le C   \prod_{i \in \I} 2^{j_i(n_i/2+m_i)}
 \prod_{i\in \mathcal{J}_1} |I_i|^{-\frac{m_i}{n_i}}
\Big\{
\int_{\mathbb{R}^{n_\mathcal{J}} }
\Big|  a_R(y_\mathcal{I}, x_\mathcal{J} )\Big |^2 dx_\mathcal{J} \Big\}^{\frac{1}{2}}.
\end{array}
\end{equation}
Form (\ref{T E connection}) and the definition of $Q_\I$ and $F^\nu_{j_\I}$,
we have 
\begin{equation}
\begin{array}{lc}\displaystyle
\int_{Q_\I^c} 
\left\{ 
 \int_{\R^{n_\J}} |T_{j_\I}^\nu a_R (z_\I, x_\J) |^2  dx_\J \right\}^{\frac{1}{2}} dx_\I 
 \\\\ \displaystyle
 \le  C \prod_{i\in \mathcal{I}_1}2^{-j_i+k_i} \int_{\R^{n_\I}} 
  \int_{\R^{n'_\I}} \prod_{i\in \I}  \Big( 1+|2^{j_i/2}(x_i-y_i)' |^2 \Big)^{-N+1} 
\\\\ \displaystyle
\times
\left\{ \int_{\R^{n''_\I}} 
 \Big( 1+  |2^{j_i\rho} (x_i -y_i)'' |^2 \Big)^{-N+1} 
\left\{\int_{\R^{n_\J}}|F^\nu_{j_\I} a_R (z_\I, x_\J, y_\I) |^2 dx_\J \right\} ^{\frac{1}{2}}dx''_\I  \right\} dx'_\I  dy_\I
\end{array}
\end{equation}
Applying Cauchy-Schwartz inequality and (\ref{F estimate}),

\begin{equation}
\begin{array}{lc}\displaystyle
\int_{Q_\I^c} 
\left\{ 
 \int_{\R^{n_\J}} |T_{j_\I}^\nu a_R (z_\I, x_\J) |^2  dx_\J \right\}^{\frac{1}{2}} dx_\I 
 \\\\ \displaystyle
  \le  C \prod_{i\in \mathcal{I}_1}2^{-j_i+k_i} \prod_{i \in \I} 2^{-j_i\rho(n_i-t_i)/2}
 \int_{\R^{n_\I}} 
  \int_{\R^{n'_\I}} \prod_{i\in \I}   \Big( 1+|2^{j_i/2}(x_i-y_i)' |^2 \Big)^{-N+1} 
\\\\ \displaystyle
\times
\left\{ \int_{\R^{n''_\I}} 
\int_{\R^{n_\J}}|F^\nu_{j_\I} a_R (z_\I, x_\J, y_\I) |^2 dx_\J dx''_\I  \right\} ^{\frac{1}{2}}dx'_\I dy_\I
\\\\ \displaystyle
  \le  C \prod_{i\in \mathcal{I}_1}2^{-j_i+k_i}\prod_{i \in \I}  2^{-j_i n'_i/2}    
  \prod_{i\in \mathcal{J}_1} |I_i|^{-\frac{m_i}{n_i}}
  \int_{\R^{n_\I}}
\Big\{
\int_{\mathbb{R}^{n_\mathcal{J}} }
\Big|  a_R(y_\mathcal{I}, x_\mathcal{J} )\Big |^2 dx_\mathcal{J} \Big\}^{\frac{1}{2}} dy_\I.
\end{array}
\end{equation}
Take into account that there are essentially at most a constant multiple $\prod_{i\in \I}2^{n'_i t_i/2}$ terms involved, which implies 
\begin{equation}
\begin{array}{lc}\displaystyle
|Q_{\J_1}|^{\frac{1}{2}}
\int_{Q_\I^c} 
\left\{ 
 \int_{\R^{n_\J}} |T_{j_\I} a_R (z_\I, x_\J) |^2  dx_\J \right\}^{\frac{1}{2}} dx_\I 
 \\\\ \displaystyle
  \le  C \prod_{i\in \mathcal{I}_1}2^{-j_i+k_i} 
  \prod_{i\in \mathcal{J}_1} |I_i|^{\frac{1}{2}}
  \int_{\R^{n_\I}}
\Big\{
\int_{\mathbb{R}^{n_\mathcal{J}} }
\Big|  a_R(y_\mathcal{I}, x_\mathcal{J} )\Big |^2 dx_\mathcal{J} \Big\}^{\frac{1}{2}} dy_\I
 \\\\ \displaystyle
  \le  C \prod_{i\in \mathcal{I}_1}2^{-j_i+k_i} 
|R|^{\frac{1}{2 }} || a_R||_{L^2(\R^n)}
  \le  C \prod_{i\in \mathcal{I}_1}2^{-j_i+k_i} .
\end{array}
\end{equation}
The inequality holds since $r_i>1$ for $i\in \J_2$ and by applying Cauchy-Schwartz inequality. Again, since $\int_{\R^{n_i}} a_R(x) dx_i=0$ for each $i =1,2,\dots,d$, repeating the argument in  lemma \ref{rectangular majorization}, we have 
\begin{equation}
\begin{array}{lc}\displaystyle
\int_{Q_\I^c} 
\left\{ 
 \int_{\R^{n_\J}} |T_{j_\I} a_R (z_\I, x_\J) |^2  dx_\J \right\}^{\frac{1}{2}} dx_\I 
  \le  C \prod_{i\in \mathcal{I}_1}2^{-j_i+k_i} \prod_{i\in \mathcal{I}_2}2^{j_i-k_i}.
\end{array}
\end{equation}
Note that $dx_\I \le \mathfrak{C} dz_\I$ since by our assumption that each $\Phi_i(x_i,\xi_i)$ satisfies the non-degeneracy condition. Therefore
\begin{equation}
\begin{array}{lc}\displaystyle
\int_{Q_\I^c} 
\left\{ 
 \int_{\R^{n_\J}} |T_{j_\I} a_R (x_\I, x_\J) |^2  dx_\J \right\}^{\frac{1}{2}} dx_\I 
  \le  C \prod_{i\in \mathcal{I}_1}2^{-j_i+k_i} \prod_{i\in \mathcal{I}_2}2^{j_i-k_i}.
\end{array}
\end{equation}

\end{proof}

\addcontentsline{toc}{section}{Acknowledgements}		
%\section*{Acknowledgements}	
	
	\bibliography{references}
	\bibliographystyle{alpha.bst}
\end{document}